\documentclass{article}

\usepackage[numbers]{natbib}
\usepackage{amssymb}
\usepackage{amsfonts}
\usepackage{amsmath}
\usepackage{amsthm}
\usepackage{url}
\usepackage[noabbrev,capitalize]{cleveref}

\newtheorem{mythe}{Theorem}[section]

\newtheorem{mydef}[mythe]{Definition}
\newtheorem{mynot}[mythe]{Notation}

\newtheorem{mypro}[mythe]{Proposition}

\title{Global choice is not conservative over local choice for Zermelo set theory}
\author{Elliot Glazer}

\date{January 6, 2023}

\begin{document}

\maketitle

\abstract{We prove that Global Choice is not conservative over ZC and that ZF $-$ Union does not prove existence of $x \cup y$ for all $x$ and $y$. Each proof is by constructing a pathological model inside a symmetric extension of $L.$}

\section{Introduction}

It is well-known that Global Choice (GC) is conservative over ZFC for set theoretic propositions. This can be proven by forcing, as was shown by Jensen and Solovay in 1965 and first published in \cite{felgner}, as well as through a reflection argument, as done in \cite{gaifman}. Both of these approaches rely heavily on the Replacement Schema, which is equivalent to a certain formulation of the reflection principle over Z (\cite{levy}). An extension of ZFC in a strong logic over which GC is not conservative is identified in \cite{shelah}.

In 2006, Ali Enayat asked on FOM (\cite{fom}) whether GC is conservative over ZC. This question was also asked in 2019 by Colin McLarty on MathOverflow (\cite{mclarty}). We answer this question negatively:

\begin{mythe}\label{gc} There is a sentence in the language $\{\in\}$ which is provable in $\mathrm{ZC} + \mathrm{GC}$ and not provable in $\mathrm{ZC}.$
\end{mythe}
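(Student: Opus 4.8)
The plan is to produce a single $\{\in\}$-sentence $\varphi$ for which $\mathrm{ZC}+\mathrm{GC}\vdash\varphi$ while $\mathrm{ZC}\not\vdash\varphi$, the latter witnessed by a model $M\models\mathrm{ZC}+\neg\varphi$ obtained as a symmetric submodel of a forcing extension of $L$. One point worth noting at the outset: the forcing here will plausibly have to be a proper class, since symmetric submodels of \emph{set} forcing over $L$ satisfy full Replacement and typically fail Choice, so they land on the wrong side of $\mathrm{ZC}$; the delicacy is precisely to kill (consequences of) $\mathrm{GC}$ without killing $\mathrm{AC}$.

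\medskip\noindent\textbf{Choosing $\varphi$ and the easy direction.} As a theory over $\mathrm{ZC}$, $\mathrm{GC}$ adjoins a fresh symbol for a global well-ordering $<^{*}$ of $V$, the axiom that $<^{*}$ is a well-ordering, and — the operative point — the Separation schema for \emph{all} $\{\in,<^{*}\}$-formulas. There is thus real latitude in which $\{\in\}$-consequence to extract, and isolating one that is simultaneously provable from $<^{*}$ via the new Separation instances and refutable in a symmetric $\mathrm{ZC}$-model is itself part of the problem. The natural candidates assert the existence of a single set encoding how a bounded portion of $V$ sits inside the \emph{global} order — for instance that for every $X$ there is a function on $X$ recording a coherent, globally defined enumeration rank of its members — phrased so that any model of $\varphi$ is forced to be ``canonically parametrized.'' For such a $\varphi$ the implication $\mathrm{ZC}+\mathrm{GC}\vdash\varphi$ is obtained by reading the required set off $<^{*}$ with the extended Separation schema; this is the soft half.

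\medskip\noindent\textbf{The symmetric model.} Starting from $L$, iterate (with Easton-type support) a class-length forcing adding a family $\langle a_{\alpha}:\alpha\in\mathrm{Ord}\rangle$ of mutually generic objects, with $a_{\alpha}$ placed high enough in the cumulative hierarchy that $\{a_{\alpha}:\alpha\in\mathrm{Ord}\}$ is a proper class of the intended submodel, hence uncollectible — this is where the absence of Replacement is exploited. Let $M=\mathrm{HS}$ be the class of denotations of hereditarily symmetric names, for a symmetry group $\mathcal G$ and a filter $\mathcal F$ of subgroups chosen to meet two opposing demands: every symmetric name has a \emph{bounded} support, so that every set of $M$ is well-orderable in $M$ (this is the hinge giving $M\models\mathrm{ZC}$ rather than merely $M\models\mathrm{Z}$, and it is where the usual $\mathrm{AC}$-destroying constructions must be abandoned); yet $\mathcal G$ acts transitively enough on tails of $\langle a_{\alpha}\rangle$ that no amenable class of $M$ can coherently well-order $\{a_{\alpha}:\alpha\in\mathrm{Ord}\}$, and in particular $M$ contains no set of the kind $\varphi$ demands — any such object would be fixed by an automorphism (of bounded support) that nevertheless moves the $a_{\alpha}$ it must distinguish. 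Verifying $M\models\mathrm{ZC}$ then reduces to Separation via the Symmetry Lemma, Power Set via normality/closure of $\mathcal F$, Pairing, Union, Infinity and Foundation as in the standard treatment, and Choice from boundedness of supports; verifying $M\models\neg\varphi$ is the homogeneity argument just indicated. The two halves together give the theorem.

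\medskip\noindent\textbf{Main obstacle.} The crux is balancing the two requirements on $(\mathcal G,\mathcal F)$: the group must be ``just transitive enough'' on the class of generics to defeat a global parametrization, while staying ``bounded enough'' that every \emph{set} retains all the Choice it needs — the two familiar templates (rigid forcing, which preserves $\mathrm{AC}$ but also $\mathrm{GC}$; generous symmetry, which kills $\mathrm{GC}$ but also $\mathrm{AC}$) each violate one of the two. A secondary obstacle, if the forcing is a proper class as anticipated, is the customary fragility of class forcing: one must check that the symmetric extension genuinely satisfies Power Set and Separation — not merely $\mathrm{Z}$ — which will require the iteration to be suitably tame and the symmetric submodel to be computed with care.
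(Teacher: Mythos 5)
Your proposal is a plan missing both of its essential ingredients, and the one concrete strategy it does commit to is not the one that works in the paper. First, the witnessing sentence is never produced: you explicitly defer it (``isolating one \ldots is itself part of the problem''), and your floated candidate --- ``for every $X$ there is a function on $X$ recording a coherent, globally defined enumeration rank of its members'' --- is not even an $\{\in\}$-sentence, since ``globally defined'' refers to the order $<^*$ that the language does not contain. Identifying the sentence is the heart of the theorem, because GC is conservative over ZFC, so any non-conservativity must exploit the absence of Replacement in a very specific way. The paper's sentence is $\neg(\mathrm{CH}\wedge\psi)$, where $\psi$ asserts that the definable class $P=\{p:\exists b\,(b=\{b,p\}\wedge b\neq p)\}$ codes a countable sequence $\langle X_n\rangle$ of countable sets of Quine atoms, a class $A$ of atoms separated by their relation to $\bigcup_n X_n$, and a surjection from $A$ onto $\aleph_2$. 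Over ZC, GC refutes $\mathrm{CH}\wedge\psi$ by choosing, uniformly via the global choice function $j$, surjections $f_n:\omega\to X_n$ and then cutting out (by Comprehension extended to $j$-formulas, with no Replacement needed, since the class in question is a subclass of a set) a surjection from $\mathcal{P}(\omega\times\omega)$ onto $\aleph_2$, contradicting CH. ZC alone cannot do this precisely because the enumerations of the $X_n$ cannot be chosen uniformly.

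Second, your model-building route (a proper-class Easton-style iteration over $L$ with a symmetric class $\mathrm{HS}$, balancing bounded supports against homogeneity) is not what the paper does, and your own ``main obstacle'' paragraph flags why it is doubtful: you would have to verify Separation, Power Set and AC for sets in a class-forcing symmetric submodel, and you still could not verify $\neg\varphi$ there because $\varphi$ is unspecified. The paper avoids class forcing entirely. It takes a symmetric extension $N$ of $L$ obtained by \emph{set} forcing in which countable choice fails badly (GCH below $\aleph_\omega$, countable sets $X_n$, and a surjection $f:\mathcal{P}(\bigcup_n X_n)\to\omega_2$), and then, inside $N$, hand-builds a \emph{set-sized, non-well-founded} $\in$-digraph $\mathcal{M}$: one starts from a base containing Quine atoms $a_{0,x},a_{1,Y}$ and self-membered codes $b_p=\{p,b_p\}$ for the pairs making up $P$, and closes under a ``deficiency'' (power-set-like) operator $\omega$ times, taking a directed union over finite subfamilies $S\in[B]^{<\omega}$. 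This yields $\mathcal{M}\models\mathrm{ZC}+\mathrm{CH}+\psi$ (every infinite set there has size some $\aleph_n$, and AC holds), so $\neg(\mathrm{CH}\wedge\psi)$ is unprovable in ZC, while it is provable in $\mathrm{ZC}+\mathrm{GC}$ as above. Note also that your intended model, being a transitive symmetric submodel, would satisfy Foundation automatically, whereas the paper's basic construction deliberately uses ill-founded coding and needs a separate, more delicate argument (the DRED machinery of \S4) to get a Foundation-respecting version; this is another sign that the coding mechanism, which your sketch lacks altogether, is where all the work lies.
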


Sources differ\footnote{E.g., the list in \cite{zerm1} does not include Foundation since Zermelo's original system had no such principle.
Other sources like \cite{mathias} include it so that Z can be understood as ``ZF $-$ Replacement."} on whether the Axiom of Foundation is included in Z. We define Z to consist of the axioms of Extensionality, Pairing, Union, Comprehension (as a schema), Power Set, and Infinity. We separately consider the system $\mathrm{Z}_{\mathrm{fnd}},$ consisting of Z plus Foundation (in the form of ``every nonempty set has an $\in$-minimal element"). Then ZC ($\mathrm{ZC}_{\mathrm{fnd}}$) is the system Z ($\mathrm{Z}_{\mathrm{fnd}}$) plus the Axiom of Choice. Any of these systems plus GC refers to the $\{\in, j\}$-theory in which $j$ is a class choice function on all nonempty sets, and the Comprehension schema is expanded to include formulas incorporating $j.$

Enayat's question still has a negative answer when formulated with Foundation:

\begin{mythe}\label{gcfund} There is a sentence in the language $\{\in\}$ which is provable in $\mathrm{ZC}_{\mathrm{fnd}} + \mathrm{GC}$ and not provable in $\mathrm{ZC}_{\mathrm{fnd}}.$
\end{mythe}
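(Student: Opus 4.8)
\section*{Proof proposal}

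The plan is to establish the non-provability half by producing a model of $\mathrm{ZC}_{\mathrm{fnd}}$ in which a suitable $\{\in\}$-sentence $\sigma$ fails, where $\sigma$ is chosen so that $\mathrm{ZC}_{\mathrm{fnd}} + \mathrm{GC} \vdash \sigma$ essentially by inspection: $\sigma$ will be the first-order ``shadow'' of a construction that a global choice function $j$ carries out uniformly, so that any model carrying a $j$ witnessing $\mathrm{GC}$ automatically satisfies it, while a model with only local choice can refute it. As in \cref{gc}, the model is manufactured by symmetric methods over $L$; the new ingredient forced on us by the subscript $\mathrm{fnd}$ is that the pathological model must satisfy Foundation. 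In the proof of \cref{gc} one is free to build the relevant pathology out of urelements (or Quine atoms) on which the symmetry group acts; here those ill-founded ingredients must be replaced throughout by genuine, well-founded generic sets --- generic subsets of ordinals --- which is precisely what passing to ``a symmetric extension of $L$'' buys (cf.\ the Jech--Sochor transfer of Fraenkel--Mostowski models).

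Concretely, I would force over $L$ with an Easton-style product $\mathbb{P}$ (a set forcing over a large $V_\lambda^L$, or a class forcing with set supports) adjoining, at a carefully spread sequence of ranks, pairs $\{b_\alpha, b'_\alpha\}$ of mutually generic sets together with the bookkeeping needed for an automorphism group $\mathcal{G} \le \mathrm{Aut}(\mathbb{P})$ that both permutes the pairs and swaps $b_\alpha \leftrightarrow b'_\alpha$ within each pair. The pathological model $N$ is then the submodel given by the hereditarily symmetric names, where --- and this is the decisive deviation from the recipe that would produce $\mathrm{ZF}$ --- symmetry is taken with respect to a filter of subgroups of the form $\mathrm{fix}(E)$ for $E$ a \emph{small} (bounded, or $<\lambda$-sized) set of coordinates. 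The intended effect: every set of $N$ depends on only a small amount of the generic data and is therefore well-orderable in $N$ --- in particular every set-sized subfamily of the pairs has a choice function --- so $N \models \mathrm{AC}$ and hence $N \models \mathrm{ZC}_{\mathrm{fnd}}$; but the \emph{entire} system of pairs is not a set of $N$ and is not uniformly choosable by anything $N$ can see, so $N$ admits no global choice function compatible with Comprehension, and $\sigma$ fails in $N$. Foundation in $N$ is immediate, since every element of $N$ is a genuine well-founded set of the ambient forcing extension; so the feature distinguishing this theorem from \cref{gc} evaporates once the atoms are replaced by generics --- \emph{provided} the rest of the construction still goes through.

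That proviso is where the work is. The hard part is to reconcile, in one model, (i) full Power Set and full $\{\in\}$-Comprehension, (ii) Foundation, (iii) local choice, and (iv) the \emph{failure} of global choice, since the obvious devices for (iv) tend to wreck (i) or (iii). A rank-initial segment $V_\kappa$ of an ordinary symmetric extension of $L$ will not do: to make $V_\kappa$ witness the absence of global choice one must let the generics be visible to $V_\kappa$, but then some set in $V_\kappa$ --- a small sub-family of the pairs, or a small power set --- is already non-well-orderable, killing $\mathrm{AC}$; push the generics high enough to rescue $\mathrm{AC}$ in $V_\kappa$, and $V_\kappa$ recovers a definable global well-order. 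So the support apparatus --- the group $\mathcal{G}$, the exact notion of ``small'' support, and the spacing of the generics by rank --- must be engineered so that a small support can pin down a choice for any set-sized chunk of the pairs (securing (iii), and, with care, (i) and (ii)) while no admissible support, and no class of $N$, can pin down a choice for all of them at once (securing (iv)). Carrying this out rigorously --- the usual class-forcing precautions if $\mathbb{P}$ is a proper class (definability of the forcing relation over $L$, preservation of the $\mathrm{Z}$-axioms), a check that $\mathrm{Replacement}$ genuinely fails in $N$ so that $N$ really is only a $\mathrm{ZC}_{\mathrm{fnd}}$-model, and the verification that $N \not\models \sigma$ --- is, I expect, the bulk of the proof, and is presumably done by transplanting the machinery already developed for \cref{gc}, with well-founded generics in place of urelements and with $\sigma$ (or a minor variant) as the sentence refuted.
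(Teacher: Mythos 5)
There is a genuine gap, and it is structural rather than a matter of detail. Your witness model is to be the hereditarily symmetric submodel of a (class) forcing extension of $L$, arranged so that it satisfies AC while admitting no global choice function. But symmetric submodels built from hereditarily symmetric names satisfy all of ZF, Replacement included; if you additionally secure AC in it, you have a ZFC model, and then the very conservativity results cited in the introduction (Jensen--Solovay/Felgner, Gaifman) show that every $\{\in\}$-consequence of $\mathrm{ZFC}+\mathrm{GC}$ --- hence of $\mathrm{ZC}_{\mathrm{fnd}}+\mathrm{GC}$ --- already holds there. So the mere absence of a global choice function in your $N$ can never be detected by an $\{\in\}$-sentence; no candidate $\sigma$ can fail in such a model. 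You sense this when you note that ``Replacement must genuinely fail in $N$,'' but the construction you describe cannot make it fail, and tuning supports, spacing of generics, or class-forcing bookkeeping does not change this. Relatedly, you never actually exhibit $\sigma$: ``the first-order shadow of a construction that $j$ carries out uniformly'' is not a sentence, and producing a concrete $\{\in\}$-sentence refutable by GC yet consistent with $\mathrm{ZC}_{\mathrm{fnd}}$ is precisely the content of the theorem.

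The paper's route is different on both counts. The symmetric extension $N$ of \cref{modeln} is a choiceless ZF model containing the pathology (a countable sequence of countable sets $X_n$ with a surjection from $\mathcal{P}(\bigcup_n X_n)$ onto $\omega_2$); the model of $\mathrm{ZC}_{\mathrm{fnd}}$ is then a custom, non-transitive digraph model $\mathcal{M}^{\mathrm{DRED}}$ built by hand inside $N$ as a union of iterated ``deficiency'' closures over finite pieces of the generic data. Every set of that model has size some $\aleph_n$ and GCH holds, so local choice and CH are satisfied, while Replacement fails; crucially, the external pathology is coded by a definable class $P$, identified in the Foundation case not by Quine atoms but by elements $p$ admitting arbitrarily long finite chains $b_j=\{b_{j+1},p\}$, with the DRED (depth/rank) apparatus guaranteeing Foundation. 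The sentence is then $\neg(\mathrm{CH}\wedge\psi)$, where $\psi$ asserts that this definable class codes the sequence $\langle X_n\rangle$, a separating relation, and a class surjection onto $\aleph_2$: GC proves it (choose enumerations of the $X_n$ uniformly and read off a surjection from $\mathcal{P}(\omega)$ onto $\aleph_2$, contradicting CH), while $\mathcal{M}^{\mathrm{DRED}}$ refutes it. Your proposal dismisses the Foundation issue as evaporating once atoms are replaced by generics, but since your ambient strategy cannot produce any counterexample at all, that dismissal does not engage with the actual difficulty the DRED machinery is there to solve.
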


In \cite{oman}, Greg Oman proves in $\mathrm{ZFC} - \mathrm{Union}$ that for any finite family $\mathcal{F},$ $\bigcup \mathcal{F}$ is a set. We refer to this as the Finite Union principle (FinUnion). We will show that Oman's use of Choice is necessary:

\begin{mythe}\label{union}
The theory $\mathrm{ZF} - \mathrm{Union}$ does not prove FinUnion.
\end{mythe}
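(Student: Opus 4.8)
The plan is to prove \Cref{union} by constructing, inside a symmetric extension $N$ of $L$, a definable transitive subclass $M$ of $N$ that models $\mathrm{ZF} - \mathrm{Union}$ and contains sets $x$ and $y$ with $x \cup y \notin M$. Since $N$ is a symmetric extension of $L$ it satisfies $\mathrm{ZF} + \neg\mathrm{AC}$, and Foundation in $M$ is inherited from the well-foundedness of $N$; the failure of choice is unavoidable, because by Oman's theorem in \cite{oman} the theory $\mathrm{ZFC} - \mathrm{Union}$ already proves $\mathrm{FinUnion}$, so any model of $\mathrm{ZF} - \mathrm{Union} + \neg\mathrm{FinUnion}$ must also fail $\mathrm{AC}$. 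Note also that $M$ cannot be any $V_\alpha^N$ or $H_\kappa^N$: those always satisfy $\mathrm{Union}$, since flattening a set increases neither its rank nor its hereditary size. In fact $M \models \neg\mathrm{FinUnion}$ forces $\{x,y\}$ to have no transitive closure in $M$ --- otherwise $x \cup y$ would be carved out of that transitive closure by Comprehension --- so $M$ must contain sets of no $M$-rank, i.e.\ its cumulative hierarchy stalls before exhausting $M$.

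First I would fix the forcing over $L$. The idea is to adjoin generic material $\langle a_\xi : \xi \in I\rangle$ that splits into complementary pieces $\langle a_\xi : \xi \in I_0\rangle$ and $\langle a_\xi : \xi \in I_1\rangle$, and to choose a symmetry group acting with finite support so that, in the symmetric submodel $N$, each of the two halves is ``locally describable'' from a bounded amount of generic data while the whole sequence is not. The sets $x$ and $y$ we are after will be built, respectively, from the two halves; the point of the design is that $x$ and $y$ each survive into $M$ but $x \cup y$ recombines \emph{all} of the generic material and therefore does not.

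Next I would define $M$ --- I expect it to be the class of $x \in N$ that are hereditarily ``describable from ordinals together with finitely much of the generic material'' --- and verify the axioms of $\mathrm{ZF} - \mathrm{Union}$ in $M$. Extensionality and Foundation come from $N$; Pairing, Power Set, and Infinity are routine once one has the closure properties of the defining predicate; Comprehension holds because a definable subclass of an element of $M$ is again describable from no more data than that element. The substantive axiom is Replacement, equivalently Collection: one must show that the image of a set of $M$ under a class function of $M$ lies in $M$. This is where the exact notion of ``describability'' has to be tuned, and it is the crux of the whole argument: a Replacement image is still pinned down by a bounded amount of generic data because its domain already is, whereas the flattening operation $a \mapsto \bigcup a$ is precisely the operation that can push this bound past what $M$ tolerates.

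Finally I would read off the counterexample to $\mathrm{FinUnion}$: with $x$ and $y$ built from the two complementary halves of the generic sequence, each of $x, y$ is in $M$, but $x \cup y$ codes the entire sequence and hence is not; since $\{x,y\} \in M$ by Pairing, $M \models \neg\mathrm{FinUnion}$, so $\mathrm{ZF} - \mathrm{Union} \nvdash \mathrm{FinUnion}$. (One also checks $M \models \neg\mathrm{AC}$, as Oman's theorem demands, for instance because the generic sequence itself has no well-ordering in $M$.) I expect the main obstacle to be exactly the tension described above: in $N$, which satisfies full $\mathrm{ZF}$, no union of a set of sets is ever ``too large,'' so the quantity measured by the defining predicate of $M$ must be one that $\bigcup$ can violate while Replacement cannot --- and establishing that balance, i.e.\ verifying Collection in $M$ while exhibiting a genuine failure of $\mathrm{Union}$, is the heart of the proof.
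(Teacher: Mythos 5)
Your high-level architecture matches the paper's: work inside a symmetric extension $N$ of $L$, cut out a transitive class $M$ by a hereditary smallness condition, verify $\mathrm{ZF}-\mathrm{Union}$ with Replacement as the axiom that constrains the choice of condition, and exhibit $x, y \in M$ with $x \cup y \notin M$. Your structural observations are also correct ($M$ cannot be an $H_\kappa$ or a $V_\alpha$, and $\{x,y\}$ can have no transitive closure in $M$). But the specific mechanism you propose does not work, and you never supply one that does. If $M$ is the class of sets hereditarily ``describable from ordinals together with finitely much of the generic material,'' then $M$ satisfies Union outright: if $x$ is pinned down by a finite piece $s_x$ of the generic data and $y$ by $s_y$, then $x \cup y$ is pinned down by $s_x \cup s_y$, which is still finite, and every member of $TC(\{x \cup y\})$ other than $x \cup y$ itself already lies in $TC(\{x\}) \cup TC(\{y\})$. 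Any bound that is closed under finite combination --- which finite-support describability is --- is preserved by $x, y \mapsto x \cup y$ for exactly the same reason it is preserved by Replacement. You correctly identify this tension as the crux of the argument, but you leave it unresolved, and with the notion you sketch your model would prove FinUnion rather than refute it.

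The paper's resolution is to use a \emph{disjunction over two mutually non-injectable cardinality scales} rather than a single describability bound. In the stage-$(\omega+1)$ Bristol model, $A := V_{\omega\cdot 2}$ injects into no $\mathcal{P}^n(\alpha)$; setting $\kappa = \aleph(\mathcal{P}^{\omega}(A))$, one takes $M$ to be the class of $x$ such that every $y \in TC(\{x\})$ satisfies $|y| \le |\mathcal{P}^n(A)|$ \emph{or} $|y| \le |\mathcal{P}^n(\kappa)|$ for some $n<\omega$. Subsets and surjective images of a set that is small on one scale remain small on that same scale, so Comprehension and Replacement go through; but $A \cup \kappa$ is large on both scales simultaneously, while $A$, $\kappa$, and the pair $\{A,\kappa\}$ are each fine. (One also needs \cref{lind} to see that $M$ is a transitive set at all.) To complete your proof you would need to replace ``finitely much generic material'' by some such disjunctive bound that finite unions can break; as written, your $M$ models Union.
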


Each theorem is proven in ZF and is achieved by constructing a pathological set theoretic model inside a symmetric extension of $L.$ The construction for \cref{gc} is sketched out in \S2 and formally carried out in \S3. We show how to include Foundation in \S4. In \S5, we construct a model of ZF $-$ Union with some $x, y$ such that $x \cup y$ is not a set.

\section{Sketch of a failure of conservativity of Global Choice}

In this section, we describe the properties of a model $\mathcal{M}$ of ZC which satisfies a sentence $\varphi$ disprovable by Global Choice. A precise construction of $\mathcal{M}$ will be provided in the next section.

The following is constructed in \cite{iterate}, \S 4:
\begin{mypro}\label{modeln}
There is a symmetric extension $N$ of $L$ in which GCH holds below $\aleph_{\omega}$ and there exists a sequence of countable sets $\langle X_n \rangle _{n<\omega}$ such that there is a surjection $f: \mathcal{P}(\bigcup_{n<\omega} X_n) \rightarrow \omega_2.$
\end{mypro}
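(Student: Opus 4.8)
The plan is to produce $N$ as the limit of a length-$\omega$ iteration of symmetric extensions of $L$ --- in effect a Feferman--L\'evy-style collapse carried out above $\aleph_\omega$ --- using the iteration technology developed earlier in \cite{iterate}; iterating symmetric systems, rather than taking a single symmetric extension of a product, is what keeps the limit under control. Begin with $N_0 = L$ and maintain the inductive hypothesis that $N_n$ is a symmetric extension of $L$ having the same cardinals as $L$ in which GCH holds below $\aleph_\omega$. To pass from $N_n$ to $N_{n+1}$, perform over $N_n$ a symmetric extension that adjoins a countable set $X_n$ together with enough generic auxiliary material, with the poset and the filter of supports engineered so that: (i) the poset is closed enough above $\aleph_n$ that no new subsets of $\aleph_k$ are added for any $k<\omega$ and, by a nice-name count, each $2^{\aleph_k}$ is unchanged, so GCH below $\aleph_\omega$ is carried along; (ii) $X_n$ is countable in $N_{n+1}$, but the symmetry is arranged so that in the limit no single object enumerates the $X_n$'s coherently; and (iii) the auxiliary generic objects introduced at stage $n$ persist into the limit as part of a definable family of subsets of $\bigcup_m X_m$. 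Let $N$ be the direct limit and $Y = \bigcup_{n<\omega} X_n$.

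To finish I would verify the three conclusions in $N$. As $\langle X_n\rangle_{n<\omega}$ is a set and each $X_n$ is countable, $Y$ is a countable union of countable sets. By (i) and the closure of the tails, an induction on $k$ identifies $\mathcal{P}(\aleph_k)^{N}$ with its computation inside a bounded, choice-respecting initial segment of the iteration, which is well-orderable in $N$ of order type $\aleph_{k+1}^{L}$; so $2^{\aleph_k} = \aleph_{k+1}$ for every $k<\omega$, that is, GCH holds below $\aleph_\omega$. For the surjection, the failures of choice accumulated over the $\omega$ stages should make the family from (iii) assemble in $N$ into a definable injection $\omega_2 \hookrightarrow \mathcal{P}(Y)$; composing with a well-ordering of $\omega_2$ yields $f$. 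The operative point is that $\mathcal{P}(Y)$ --- morally $\prod_n\mathcal{P}(X_n)$ with no coherent system of enumerations of the $X_n$ --- need not have size $\aleph_1$ and can be arranged to map onto $\omega_2$.

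The main obstacle I anticipate is the tension between (i) and (iii). The closure and nice-name bookkeeping that freeze the continuum function below $\aleph_\omega$ push the symmetric model toward local choice, which fights against $\mathcal{P}(Y)$ being large; yet scrambling vigorously enough to force $\mathcal{P}(Y)$ past $\aleph_1$ endangers the GCH computation --- the more so because $Y$ is only a countable union of countable sets, so its power set cannot be enlarged crudely. Balancing both within one iteration, and checking that the limit of the symmetric systems has the claimed first-order theory (which genuinely requires the iteration framework and not a single symmetric extension), is the technical heart of the argument, and is what \cite{iterate} provides.
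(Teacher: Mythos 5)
The paper discharges this proposition by citation: the model is the explicit construction of \cite{iterate}, \S 4, an $\omega$-length iteration of symmetric extensions of $L$, so in broad strokes you are pointing at the same machinery, and you correctly isolate the crux (a countable union of countable sets $Y$ that is not itself countable, with GCH on the alephs preserved while $\mathcal{P}(Y)$ carries an $\omega_2$-indexed definable family; your observation that an injection $\omega_2 \hookrightarrow \mathcal{P}(Y)$ suffices is fine). But as a proof your text has genuine gaps rather than omitted routine details. You never specify a single forcing notion, automorphism group, or filter of supports; clauses (i)--(iii) are design goals restating the desired conclusion, not constructions, and you yourself flag the tension between (i) and (iii) as unresolved. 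The key object --- $\omega_2$ many pairwise distinct subsets of $Y=\bigcup_n X_n$ together with a definable map sending each to its index --- is only said to ``assemble'' in the limit, with no mechanism given; note that no such subset can appear in any finite stage $N_n$ (it meets every $X_m$), so this is exactly the part that cannot be delegated to the individual stages. Relatedly, taking $N$ to be the ``direct limit'' of the $N_n$ is not right: $\bigcup_n N_n$ contains neither the sequence $\langle X_n\rangle_{n<\omega}$ nor the coding subsets of $Y$, and need not satisfy ZF; producing a genuine symmetric extension of $L$ at the limit with the claimed theory is precisely the nontrivial content of the framework of \cite{iterate}, and the paper's proof consists in citing the concrete \S 4 construction that does this, not a schema.

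A smaller point: your (i) is both stronger than needed and internally redundant --- if no new subsets of any $\aleph_k$ are added, there is nothing for a nice-name count to do --- and the elements of the $X_n$ must be new sets (otherwise $Y$ would be a subset of some $L_\gamma$, hence well-orderable, hence a countable union of canonically enumerated countable sets, hence countable, forcing $\mathcal{P}(Y)$ to have size $\aleph_1$ under CH). So either those elements live above $\aleph_\omega$, or new bounded subsets are added and GCH below $\aleph_\omega$ must actually be verified rather than trivially preserved. None of this is fatal to the strategy, but it means your proposal is a plausible blueprint for the construction the paper cites, not a proof of the proposition.
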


%This holds in a symmetric extension of $L$, as constructed in [].
%Asaf Karagila's _[Iterated failures of choice](https://arxiv.org/abs/1911.09285)_, section 4.

The model we construct will satisfy ZC + GCH and that every infinite set has cardinality some $\aleph_n,$ yet will code $(X_n)_{n<\omega}$ and $f$ by definable classes.

Since ZC does not prove that every well-ordering is isomorphic to a von Neumann ordinal, we will identify $\aleph_n$ with the canonical prewellordering of $\mathcal{P}^n(\omega)$ of length $\omega_n,$ so that
$\aleph_n \in \mathcal{P}^{n+3}(\omega).$

Let $X = \bigcup_{n<\omega} X_n.$ There will be Quine atoms $a_{0,x}$ for each $x \in X$ and $a_{1,Y}$ for each $Y \subset X.$

Let $B = \{\{a_{0,x}: x \in X_n\}: n<\omega\}\cup \{\{a_{1,Y}\}: Y \subset X\}.$

Consider this model:

$$M_1=\bigcup_{S \in [B]^{<\omega}} \bigcup_{n<\omega} \mathcal{P}^n(\bigcup S).$$

Then $M_1 \models ZC + GCH + \forall S \exists n (|S| \le \aleph_n).$

Let $P = \{(2, n, a_x): x \in X_n\} \cup \{(3, a_x,a_Y): x \in Y \subset X\} \cup \{(4, a_Y, f(Y)): Y \subset X\}.$ For each $p \in P,$ let $b_p = \{p, b_p\}.$

Let $$M = \bigcup_{S \in [B]^{<\omega}} \bigcup_{n<\omega} \mathcal{P}^n \left (\mathcal{P}^5 \left (\bigcup S \right )\cup \left \{b_p:
p \in P \cap \mathcal{P}^5 \left (\bigcup S \right ) \right \} \right ).$$

Then $M \models ZC + \varphi,$ where $\varphi$ is the conjunction CH $\wedge$ $``$the class $\{p: \exists b (b=\{b, p\} \wedge b \neq p)\}$ codes a countable sequence of countable sets of atoms $X_n,$ a class of atoms which each relate to a different subclass of $\bigcup_{n<\omega} X_n,$ and a surjection from the latter class onto $\aleph_2."$

Finally, we see that Global Choice proves $\neg \varphi,$ since from a global choice function, we can choose enumerations of each $X_n,$ enumerate $\bigcup_{n<\omega} X_n,$ and define a surjection from $\mathcal{P}(\omega)$ onto $\aleph_2,$ violating CH.

\section{Construction of $\mathcal{M}$}

We begin with a modification of the constructions in \cite{finite}, Definition 3.3, carried out in ZF:
\begin{mydef}
Let $G = (M, E)$ be an extensional digraph. The \emph{deficiency} of $G,$ denoted $D(G),$ is defined by $$D(G)=\{X \subset M: \neg \exists y \in M \forall z \in M (z \in X \leftrightarrow z E y )\}.$$
\end{mydef}

\noindent We will recursively define extensional digraphs $\mathbb{V}_n(G)=(M_n(G), E_n(G)),$ $n\le \omega,$ such that for all $m \le n,$ we have
$\mathbb{V}_m(G) \subseteq_{\mathrm{end}} \mathbb{V}_n(G)$ (i.e., if $x \in M_m(G),$ $y \in M_n(G)$ are such that $y E_n x,$ then $y \in M_m(G)$ and $y E_m x$).

\begin{mynot}
Suppressing $G,$ we denote
\begin{alignat*}{2}
&M_0=\{0\} \times M &&E_0= \{((0, x), (0, y)): x E y\} \\ \\
&M_{n+1} = M_n \cup &&E_{n+1}=E_n \cup \\
&(\{n+1\} \times D(\mathbb{V}_n)) &&\{(z, (n+1, X)): z \in X, X \in D(\mathbb{V}_n)\} \\ \\
&M_{\omega}= \bigcup_{n<\omega} M_n \qquad \qquad
&&E_{\omega}= \bigcup_{n<\omega} E_n.
\end{alignat*}
\end{mynot}

%\begin{align*}
%M_0(G)=\{0\} \times G, &\qquad E_0=\{((0, x), (0, y)): x E y\}
%\\ \\ M_{n+1}(G) = M_n(G) \cup &(\{n+1\} \times D(\mathbb{V}_n(G))),
%\\ E_{n+1}(G)=E_n(G) \cup &\{(z, (n+1, X)): z \in X, X \in D(\mathbb{V}_n(G))\}
%\\ \\ M_{\omega}(G)= \bigcup_{n<\omega} M_n(G), &\qquad E_{\omega}(G)= %\bigcup_{n<\omega} E_n(G)
%\end{align*}

\medskip

\begin{mypro}\label{zer} For any extensional digraph $G,$
\begin{enumerate}
\item $\mathbb{V}_{\omega}(G) \models \mathrm{Z} - \mathrm{Infinity},$ and
\item $\mathbb{V}_{\omega}(G)$ has a true power set operator, i.e. for any $X \in M_{\omega}{G}$ and $Y \subset \{x \in M_{\omega}(G): x E_{\omega} X\},$ there is $Y' \in M_{\omega}{G}$ such that $Y= \{x, \in M_{\omega}(G): x E_{\omega} Y'\}.$
\end{enumerate}
\end{mypro}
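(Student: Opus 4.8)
The plan is to establish a handful of structural facts by induction on $n$, distill a single ``realization lemma,'' and then read each axiom off that lemma. \emph{Step 1: the induction.} By simultaneous induction on $n < \omega$ I will check that $E_n \subseteq M_n \times M_n$, that every vertex of $M_n$ is a pair whose first coordinate is $\le n$, that $\mathbb{V}_n$ is extensional, and that $\mathbb{V}_m \subseteq_{\mathrm{end}} \mathbb{V}_n$ for all $m \le n$. The last property reduces to $\mathbb{V}_n \subseteq_{\mathrm{end}} \mathbb{V}_{n+1}$, which holds because every edge in $E_{n+1} \setminus E_n$ points into a vertex $(n+1, X)$, whose first coordinate $n+1$ keeps it out of $M_n$; so no old vertex acquires a new predecessor. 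Extensionality of $\mathbb{V}_{n+1}$ then follows by cases: two old vertices keep their $\mathbb{V}_n$-extensions, which are distinct whenever the vertices are by the inductive hypothesis; two new vertices $(n+1, X), (n+1, X')$ have extensions $X, X'$ and so coincide only if $X = X'$; and a new vertex $(n+1, X)$ cannot share an extension with an old vertex $y$, since that would give $X = \{z : z E_n y\}$, contradicting $X \in D(\mathbb{V}_n)$. Passing to the limit, $\mathbb{V}_\omega$ is extensional, $E_\omega \subseteq M_\omega \times M_\omega$, and for every $x \in M_n$ one has $\{z \in M_\omega : z E_\omega x\} = \{z \in M_n : z E_n x\} \subseteq M_n$.

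\emph{Step 2: the realization lemma.} The crux is the claim that for every $n < \omega$ and every $S \subseteq M_n$: (a) there is a (by extensionality unique) $y \in M_{n+1}$ with $\{z \in M_\omega : z E_\omega y\} = S$; and (b) conversely, any $y \in M_\omega$ with $\{z : z E_\omega y\} \subseteq M_n$ already belongs to $M_{n+1}$. For (a): if some $w \in M_n$ realizes $S$ then it still does in $\mathbb{V}_\omega$ by Step 1; otherwise $S \in D(\mathbb{V}_n)$ (here $S \subseteq M_n$ is used), so $(n+1, S) \in M_{n+1}$ realizes $S$. For (b): were $y$ to first appear at a stage $m \ge n+2$, it would have the form $(m, X)$ with $X = \{z : z E_\omega y\} \subseteq M_n$ and $X \in D(\mathbb{V}_{m-1})$; but by (a) some vertex of $M_{n+1} \subseteq M_{m-1}$ realizes $X$, and its extension is unchanged in $\mathbb{V}_{m-1}$ by Step 1, contradicting $X \in D(\mathbb{V}_{m-1})$.

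\emph{Step 3: the axioms.} Extensionality is Step 1. For the ``true power set operator'' of part 2 and for Pairing, Union, and the Comprehension schema in part 1, the recipe is identical: present the required extension as a subset of some $M_n$ and invoke Step 2(a). Concretely, given $X \in M_n$, Step 1 gives $\{x : x E_\omega X\} \subseteq M_n$, so every $Y$ with $Y \subseteq \{x : x E_\omega X\}$ lies in $M_n$ and is realized in $M_{n+1}$ — this is part 2. For Pairing, apply Step 2(a) to $\{a, b\} \subseteq M_n$ where $a, b \in M_n$; for Union, to $\bigcup_{y E_\omega a} \{z : z E_\omega y\} \subseteq M_n$, which lies in $M_n$ because $a \in M_n$ forces its elements and theirs into $M_n$ by the last line of Step 1; for an instance $\psi$ of Comprehension with parameters $\bar p \in M_\omega$ and $a \in M_n$, to $\{y \in M_\omega : y E_\omega a \wedge \mathbb{V}_\omega \models \psi(y, \bar p)\} \subseteq M_n$ (since $\mathbb{V}_\omega(G)$ is a set, its satisfaction relation is unambiguous; for class $G$ the same works with Comprehension read schematically). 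Power Set additionally needs Step 2(b): for $X \in M_n$, any $y$ with $\{z : z E_\omega y\} \subseteq \{z : z E_\omega X\} \subseteq M_n$ lies in $M_{n+1}$ by 2(b), so the collection of all such ``subsets of $X$'' is itself a subset of $M_{n+1}$, hence realized at stage $n+2$ by 2(a).

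I expect the main obstacle to be Step 1, and within it the observation that edges are only ever added \emph{into freshly created vertices} — this single fact drives both the end-extension property and, together with the definition of $D$, the preservation of extensionality. The secondary subtlety is clause (b) of Step 2: it is the only point where one must know that a set deficient at a late stage cannot be a subset of an early $M_n$, and it is exactly what separates the (easy) ``true power set operator'' of part 2 from the full Power Set axiom of part 1.
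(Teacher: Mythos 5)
Your proposal is correct and follows essentially the same route as the paper: the paper's (very terse) proof amounts to exactly your realization lemma — every subset of $M_n$ is realized in $M_{n+1}$, and (for Power Set) no vertex appearing after stage $n+1$ can have its extension contained in $M_n$ — applied axiom by axiom. You have simply made explicit the end-extension induction and clause (b) of Step 2, which the paper leaves implicit.
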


\begin{proof} We check each axiom:

Extensionality: This follows from extensionality of
$\mathbb{V}_{\omega}(G).$

Pairing: For $x_0, x_1 \in M_n(G),$ $\{x_0, x_1\} \in M_{n+1}(G).$

Union: For $x \in M_n(G),$ $\bigcup x \in M_{n+1}(G).$

Comprehension and Power Set: For $x \in M_n(G),$ every subset of $x$ is in $M_{n+1}(G).$ Then $\mathcal{P}^{\mathbb{V}_{\omega}(G)}(x)=\mathcal{P}^V(x) \in M_{n+2}(G).$

\end{proof}

We now work in $N$ as in \cref{modeln}.

Let $a_x = (V_{\omega \cdot 2}, 0, x)$ and identify
$a_{(i, x)}=a_{i,x}.$ Let $b_x = (V_{\omega \cdot 2}, 1, x).$ Define
\begin{align*}
B = &\{\{a_{0,x}: x \in X_n\}: n<\omega\}\cup \{\{a_{1, Y}\}: Y \subset X\}
\\P = &\{(0, n, a_{0,x}): x \in X_n\} \cup \{(1, a_{0,x},a_{1,Y}): x \in Y \subset X\} \cup
\\  &\{(2, a_{1,Y}, f(Y)): Y \subset X\}.
\end{align*}

Now we construct a digraph satisfying ZC in which $P$ is definable:

\begin{mynot}\label{badnot} For $S \in [B]^{<\omega},$ we make the following constructions:

$$
M_S = V_{\omega+5} \cup \bigcup S \qquad
E_S = \in \restriction V_{\omega + 5} \cup \left \{(a, a): a \in \bigcup S \right \}$$
\begin{align*}
G_S &= (M_S, E_S)
\\M_{P, S} &= M_5(G_S) \cup \{b_p: p \in P \cap M_5(G_S)\}
\\E_{P, S} &= E_5(G_S) \cup \bigcup_{p \in P \cap M_5(G_S)} 
\{(p, b_p), (b_p, b_p)\}
\\ \\G_{P, S}&=(M_{P,S}, E_{P,S})
\\ \mathcal{M}_{P,S} &= \mathbb{V}_{\omega}(G_{P,S}).\\
\end{align*}
\end{mynot}

\noindent Since GCH holds below $\aleph_{\omega},$ an inductive argument shows that for all $n,$
\begin{align*}
    |M_n(G_S)|&=\aleph_{n+5} \\ |M_n(G_{P,S})|&=\aleph_{n+10}.
\end{align*}

\begin{mypro}\label{gch} For any $S \in [B]^{<\omega},$
\begin{enumerate}
\item $$\mathcal{M}_{P,S} \models \mathrm{ZC} + GCH + \forall X, \exists n (
|X \cup \omega| = \aleph_n),$$
\item
$$\mathcal{M}_{P,S} \models P \cap M_5(G_S) = \{p: \exists b (b=\{b, p\} \wedge b \neq p)\}. $$
\end{enumerate}
\end{mypro}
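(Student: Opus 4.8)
The plan is to verify the axioms of $\mathrm{ZC}$ one family at a time, leaning on \cref{zer}, and then to read clause (2) off the way the atoms $b_p$ were attached. First I would check that $G_{P,S}$ is an extensional digraph, so that \cref{zer} applies: $(V_{\omega+5},\in)$ is transitive, hence extensional; the Quine atoms $a_{0,x},a_{1,Y}$ of $\bigcup S$ each have a distinct singleton extension and lie at rank above $\omega+5$; and each new atom $b_p$ has extension $\{p,b_p\}$ with $b_p\notin M_5(G_S)$. So distinct vertices always get distinct extensions, and this is preserved through the deficiency layers. \cref{zer} then gives $\mathcal{M}_{P,S}\models\mathrm{Z}-\mathrm{Infinity}$ together with a true power set operator, and Infinity holds because $V_{\omega+5}$ (in particular $\omega$) sits inside $\mathbb{V}_0(G_{P,S})$ as an $E_\omega$-end-extension-closed $\in$-isomorphic copy.

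The substantive part of clause (1) is the Axiom of Choice. I would show that every vertex $v\in M_\omega(G_{P,S})$ has an $E_\omega$-extension that is well-orderable in $N$, and then code such a well-ordering --- as a set of Kuratowski pairs, using Pairing and the true power set operator --- as an element $R$ of $\mathcal{M}_{P,S}$; since the subsets of a set as computed in $\mathcal{M}_{P,S}$ are exactly its genuine subsets, any nonempty subset of the ordered set inside the model is a genuine nonempty set in $N$ and so has an $R$-least element, making $R$ an internal well-order. For the first half I would prove by induction that each $M_n(G_{P,S})$ is well-orderable in $N$ (this is the computation $|M_n(G_{P,S})|=\aleph_{n+10}$ already noted): the base case reduces to well-orderability of $M_5(G_S)$, which follows by an inner induction from the facts that $M_S=V_{\omega+5}\cup\bigcup S$ is well-orderable --- $|V_{\omega+5}|=\beth_5=\aleph_5$ by GCH below $\aleph_\omega$, and $\bigcup S$ is a finite union of individually countable sets together with finitely many singletons --- and that $P\cap M_5(G_S)$ is countable, since for finite $S$ only finitely many atoms $a_{1,Y}$ and only countably many atoms $a_{0,x}$ occur in $M_5(G_S)$, so the adjoined family $\{b_p:p\in P\cap M_5(G_S)\}$ is well-orderable as well; the inductive step uses $D(\mathbb{V}_n)\subseteq\mathcal{P}(M_n)$ together with the fact that GCH below $\aleph_\omega$ turns the power set of a well-orderable set of size $<\aleph_\omega$ into a well-orderable set. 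Since $v$ lands in some $M_n(G_{P,S})$ and the end-extension property confines the entire $E_\omega$-extension of $v$ to $M_n(G_{P,S})$, that extension is well-orderable in $N$, which finishes AC. With AC available, GCH in $\mathcal{M}_{P,S}$ follows from the true power set operator and GCH below $\aleph_\omega$ in $N$: an infinite set of the model is well-orderable, of some cardinality $\aleph_n<\aleph_\omega$ (as $|M_\omega(G_{P,S})|=\aleph_\omega$), and its power set is the true one, of size $2^{\aleph_n}=\aleph_{n+1}$; the same observation applied to $X\cup\omega$ yields $\forall X\,\exists n\,(|X\cup\omega|=\aleph_n)$.

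For clause (2), the inclusion $\supseteq$ is immediate: for $p\in P\cap M_5(G_S)$ the vertex $b_p$ has $E_\omega$-extension $\{p,b_p\}$ by construction and end-extension, so $b_p=\{b_p,p\}$ in the model, and $b_p\neq p$ since $b_p\notin M_5(G_S)$ whereas $p\in M_5(G_S)$. For $\subseteq$, suppose $b\neq p$ satisfy $b=\{b,p\}$; then $b$ has an $E_\omega$-self-loop, so I would classify the self-looping vertices of $\mathbb{V}_\omega(G_{P,S})$. A deficiency vertex $(k+1,X)$ cannot self-loop since $(k+1,X)\notin X\subseteq M_k$; a vertex of the bottom layer self-loops exactly when the corresponding $G_{P,S}$-vertex $u$ satisfies $u\,E_{P,S}\,u$, and $E_{P,S}$ has self-loops only on the old atoms $a_{0,x},a_{1,Y}$ (the sole $E_S$-self-loops, since $\in$ on $V_{\omega+5}$ is well-founded and the deficiency layers create none) and on the new atoms $b_p$. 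An old atom has a singleton extension, which cannot equal $\{b,p\}$ with $b\neq p$; hence $b=b_{p'}$ for some $p'\in P\cap M_5(G_S)$, and its two-element extension $\{b_{p'},p'\}$ forces $p=p'$. Therefore $p\in P\cap M_5(G_S)$.

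The step I expect to be the main obstacle is the Axiom of Choice in clause (1): keeping careful track that the base set $M_5(G_S)$ and the index set $P\cap M_5(G_S)$ are well-orderable in $N$ without covertly choosing enumerations of infinitely many of the $X_n$ at once, and that GCH below $\aleph_\omega$ genuinely carries well-orderability through each deficiency layer, so that the internal well-orderings demanded by AC can in fact be built inside $\mathcal{M}_{P,S}$.
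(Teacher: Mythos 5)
Your proposal is correct and follows essentially the same route as the paper: Infinity and the $\mathrm{Z}-\mathrm{Infinity}$ axioms via \cref{zer} and the true power set operator, Choice/GCH/$\forall X\exists n(|X\cup\omega|=\aleph_n)$ from the inductive computation $|M_n(G_{P,S})|=\aleph_{n+10}$ using GCH below $\aleph_\omega$ in $N$, and clause (2) by observing that the only self-looping vertices are the singleton atoms $a_y$ and the $b_p$. You simply spell out in more detail the steps the paper dismisses as immediate (the internal coding of well-orders and the classification of self-loops), so there is nothing to correct.
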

\begin{proof}

(1)  By \cref{zer}, we have that $\mathcal{M}_{P,S} \models \mathrm{Z} - \mathrm{Infinity},$ and it has a true power set operator.
The Axiom of Infinity is witnessed by $\omega.$

Since $\mathcal{M}_{P,S}$ has a true power set operator, it inherits from $N$ the facts that for all $n,$ $2^{\aleph_n}=\aleph_{n+1}$ and $|M_n(G_{P,S})|=\aleph_{n+10}.$
The rest of the proposition immediately follows.

(2) Since the deficiency operator $D$ never adds sets which are elements of themselves, we see that if $x \in \mathcal{M}_{P,S}$ is such that $x \; E_{P,S} \; x,$ then there is $y$ such that $x=a_y$ or such that $x=b_y.$ Since $|a_y|=1$ for any $y,$ we have
$$\{p: \exists b (b=\{b, p\} \wedge b \neq p)\}=P \cap M_{\omega}(G_{P,S}) = P \cap M_5(G_S),$$
the second equality relying on our coding $\aleph_2 \subseteq V_{\omega+5} \subseteq M_S.$

\end{proof}

\noindent Finally, we construct $\mathcal{M}:$

\begin{mynot}
\begin{align*}
\overline{M} = \bigcup_{S \in [B]^{<\omega}} M_{\omega}(G_{P,S})
&\qquad \overline{E} = \bigcup_{S \in [B]^{<\omega}} E_{\omega}(G_{P,S})
\\ \mathcal{M} &=(\overline{M}, \overline{E}).
\end{align*}
\end{mynot}
\noindent Notice that if $S \subseteq T,$ then
$$\mathcal{M}_{P,S} \subseteq_{\mathrm{end}} \mathcal{M}_{P,T}
\subseteq_{\mathrm{end}} \mathcal{M}.$$

\begin{mypro}\label{ch} \phantom{x}
\begin{enumerate}
\item $$\mathcal{M} \models \mathrm{ZC}+ \mathrm{GCH} + \forall X, \exists n (
|X \cup \omega| = \aleph_n),$$ 
\item \qquad \qquad \qquad $\mathcal{M}$ has a true power set operator,
\item $$\mathcal{M} \models P = \{p: \exists b (b=\{b, p\} \wedge b \neq p)\}.$$
\end{enumerate}
\end{mypro}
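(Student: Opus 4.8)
\noindent\emph{Proof proposal.} The plan is to read off each clause from the corresponding statement about the $\mathcal{M}_{P,S}$ in \cref{gch}, using that $\mathcal{M}=\bigcup_{S\in[B]^{<\omega}}\mathcal{M}_{P,S}$ is a directed union along $([B]^{<\omega},\subseteq)$ in which every inclusion $\mathcal{M}_{P,S}\subseteq\mathcal{M}_{P,T}$ (for $S\subseteq T$) and every inclusion $\mathcal{M}_{P,S}\subseteq\mathcal{M}$ is an end-extension. Since $B$ is a set, $\overline{M}$ is a set in $N$; hence for any $x\in\overline{M}$ the collection of $\overline{E}$-predecessors of $x$, and any subset of it, are sets, and the satisfaction relation of $\mathcal{M}$ is definable in $N$. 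Two facts will be used throughout. First, because an end-extension leaves unchanged the $\overline{E}$-predecessors --- and, recursively, the hereditary $\overline{E}$-predecessors --- of an ``old'' element, any property expressed by a formula all of whose quantifiers range hereditarily below fixed parameters is absolute between $\mathcal{M}$ and any $\mathcal{M}_{P,S}$ that contains those parameters; this covers ``$z=\{x,y\}$'', ``$u=\bigcup x$'', ``$g$ is a choice function on $x$'', ``$h$ is a bijection between $A$ and $B$'', and the like. Second, part (2) itself: given $x\in\overline{M}$ and a subset $Y$ (in $N$) of its $\overline{E}$-predecessors, pick $S$ with $x\in M_\omega(G_{P,S})$; by the first fact $Y$ is a subset of the $E_{P,S}$-predecessors of $x$, so by the true power set operator of $\mathcal{M}_{P,S}$ (\cref{zer}(2) for $G_{P,S}$, as used in \cref{gch}) some $Y'\in M_\omega(G_{P,S})$ codes $Y$ there, hence codes $Y$ in $\mathcal{M}$.

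Granting (2), the Zermelo axioms and the cardinal-arithmetic clauses of (1) follow. Extensionality holds since $\mathbb{V}_\omega(G_{P,S})$ is extensional and any two elements of $\overline{M}$ with equal $\overline{E}$-predecessors lie in a common $\mathcal{M}_{P,S}$. Pairing, Union, Infinity (witnessed by $\omega$) and Choice are witnessed inside some $\mathcal{M}_{P,S}$ by \cref{gch}(1), and the witnesses persist upward by the first fact. Comprehension and Power Set are consequences of the true power set operator, since for any $x\in\overline{M}$, formula $\varphi$, and parameters from $\overline{M}$, the set $\{z : z\,\overline{E}\,x \wedge \mathcal{M}\models\varphi(z,\dots)\}$ is, in $N$, a subset of the $\overline{E}$-predecessors of $x$, hence coded in $\mathcal{M}$ by (2). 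For $\mathrm{GCH}$ and $\forall X\,\exists n\,(|X\cup\omega|=\aleph_n)$: each $\aleph_n$ is a pure set present in every $\mathcal{M}_{P,S}$ and in $\mathcal{M}$ with the same hereditary $\overline{E}$-predecessors, and since $\mathcal{M}$ and $\mathcal{M}_{P,S}$ both have true power set operators, Extensionality forces $\mathcal{P}^{\mathcal{M}}(\aleph_n)=\mathcal{P}^{\mathcal{M}_{P,S}}(\aleph_n)$; the bijections witnessing $2^{\aleph_n}=\aleph_{n+1}$ from \cref{gch}(1), and (for any infinite $X\in\overline{M}$, choosing $S$ with $X\in M_\omega(G_{P,S})$) those witnessing $|X\cup\omega|=\aleph_n$, then remain bijections in $\mathcal{M}$ by the first fact.

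For (3): each $p\in P$ is a fixed finite tuple built from natural numbers, elements of $\aleph_2\subseteq V_{\omega+5}$, and single atoms $a_{0,x}$ or $a_{1,Y}$, so for a suitable finite $S\subseteq B$ it appears in $M_5(G_S)$; thus $P\subseteq\bigcup_S M_5(G_S)\subseteq\overline{M}$, and for such $S$ the element $b_p\in M_{P,S}$ satisfies $b_p=\{b_p,p\}$ and $b_p\neq p$ in $\mathcal{M}_{P,S}$, hence in $\mathcal{M}$ by the first fact; so $P$ is contained in the class $\{p : \exists b\,(b=\{b,p\}\wedge b\neq p)\}$ of $\mathcal{M}$. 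Conversely, if $\mathcal{M}\models b=\{b,p\}\wedge b\neq p$, choose $S$ with $b\in M_\omega(G_{P,S})$; by the first fact the same holds in $\mathcal{M}_{P,S}$, so \cref{gch}(2) gives $p\in P\cap M_5(G_S)\subseteq P$.

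The only delicate point is the claim in the second paragraph that passing to the directed union does not enlarge the power set of the pure set $\aleph_n$; this is precisely where having a true power set operator --- rather than merely Power Set, with no Replacement available --- is essential, and is the reason \cref{zer}(2) was isolated. Everything else is the routine check that hereditarily bounded witnesses survive the end-extensions.
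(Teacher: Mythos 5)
Your proposal is correct and follows essentially the same route as the paper: the paper's proof simply says the claims are routine to verify from \cref{gch} together with the end-extension relations $\mathcal{M}_{P,S} \subseteq_{\mathrm{end}} \mathcal{M}$ along the directed system $[B]^{<\omega}$, illustrating only the case of Pairing. You have carried out the same transfer argument in fuller detail (absoluteness of hereditarily bounded witnesses, the true power set operator handling Comprehension, Power Set, and the preservation of $\mathcal{P}(\aleph_n)$, and both inclusions in part (3)), which is exactly the intended verification.
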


\begin{proof}

It is routine to verify this from \cref{gch} and the fact that for all $S \in [B]^{<\omega},$ $\mathcal{M}_{P,S} \subseteq_{\mathrm{end}} \mathcal{M}.$ E.g., to verify Pairing, fix $y_0, y_1 \in \overline{M}.$ Let $S_i \in [B]^{<\omega}$ be such that $y_i \in \mathcal{M}_{P, S_i}.$ Then
$$\{y_0, y_1\} \in \mathcal{M}_{P, S_i} \subseteq_{\mathrm{end}} \mathcal{M}.$$

\end{proof}

Let $\psi$ assert that the class $P:=\{p: \exists b (b=\{b, p\})\}$ has sections $P_i$ with the following properties:
\begin{enumerate}
\item $P_0$ is a sequence $\langle X_n \rangle$ of countable sets of Quine atoms, 
\item $P_1$ codes a binary relation $R$ and a class $A$ of Quine atoms such that for all $a_0, a_1 \in A,$ if
$$\left \{x \in \bigcup_n X_n: x \, R \, a_0 \right \}= \left \{x \in \bigcup_n X_n: x \, R \, a_1 \right \},$$
then $a_0=a_1,$
\item $P_2$ is a surjection from $A$ onto $\aleph_2.$
\end{enumerate}

\begin{mypro}\label{final}\phantom{x} 
\begin{enumerate}
\item $$\mathcal{M} \models \mathrm{ZC} + (\mathrm{CH} \wedge \psi),$$
\item $$\mathrm{ZC} + \mathrm{GC} \vdash \neg(\mathrm{CH} \wedge \psi).$$
\end{enumerate}
\end{mypro}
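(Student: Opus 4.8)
The plan is to derive both parts from the structural facts about $\mathcal{M}$ already established in \cref{ch}, together with the properties of $N$ from \cref{modeln}.

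For part (1), I would first observe that $\mathcal{M} \models \mathrm{ZC} + \mathrm{GCH}$ is immediate from \cref{ch}(1), and that $\mathrm{GCH}$ implies $\mathrm{CH}$; more precisely, since $\mathcal{M}$ has a true power set operator and satisfies $2^{\aleph_n} = \aleph_{n+1}$ for all $n$, and every infinite set has cardinality some $\aleph_n$, the continuum in $\mathcal{M}$ is exactly $\aleph_1$, so $\mathrm{CH}$ holds. The bulk of the work is verifying $\psi$. By \cref{ch}(3), the class $\{p : \exists b(b = \{b,p\})\}$ is exactly $P$, so I must check that $P$, sliced into its three sections $P_0, P_1, P_2$ (by first coordinate $0,1,2$), has the asserted properties \emph{as computed inside $\mathcal{M}$}. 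For $P_0$: the elements $\{a_{0,x} : x \in X_n\}$ and the index $n$ are coded faithfully because $\omega \subseteq V_{\omega+5} \subseteq M_S$ and each $a_{0,x}$ is a Quine atom of $\mathcal{M}$; the sequence $\langle X_n\rangle_{n<\omega}$ lives in $\mathcal{M}$ because for each finite $S$ the relevant initial segments appear, and $\mathcal{M}$ has enough comprehension to assemble the class. Since each $X_n$ is countable in $N$ and $\mathcal{M}$ has a true power set operator agreeing with $N$ on hereditarily countable-type objects, each $X_n$ remains countable in $\mathcal{M}$. For $P_1$: the tuples $(1, a_{0,x}, a_{1,Y})$ for $x \in Y \subseteq X$ code a relation $R$ on $(\bigcup_n X_n) \times A$ where $A = \{a_{1,Y} : Y \subseteq X\}$; the injectivity condition ``$\{x : xRa_0\} = \{x : xRa_1\} \Rightarrow a_0 = a_1$'' holds because $\{x : x R a_{1,Y}\} = Y$ and $Y \mapsto a_{1,Y}$ is injective by construction of $B$. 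For $P_2$: the tuples $(2, a_{1,Y}, f(Y))$ code a function $A \to \aleph_2$, and this is a surjection because $f : \mathcal{P}(\bigcup_n X_n) \to \omega_2$ is surjective in $N$ (\cref{modeln}) and every $Y \subseteq X$ in $N$ is represented by some $a_{1,Y}$ with $\{a_{1,Y}\} \in B$, hence appears in $\mathcal{M}$; one checks $\aleph_2^{\mathcal{M}}$ (the canonical prewellordering of $\mathcal{P}^2(\omega)$ of length $\omega_2$) matches $\omega_2^N$ since the power set operator is true.

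For part (2), I would argue in $\mathrm{ZC} + \mathrm{GC}$ and derive a contradiction from $\mathrm{CH} \wedge \psi$. Given the global choice function $j$, restrict it to each $X_n$ to get, uniformly in $n$, a well-ordering of $X_n$, hence (using $X_n$ countable, as asserted by $\psi$ clause (1)) an enumeration $e_n : \omega \to X_n$; the sequence $\langle e_n \rangle_{n<\omega}$ is a set by Replacement-free means since it is the image of $\omega$ under a class function defined from $j$, and Comprehension in the expanded language with $j$ suffices to collect it inside a suitable $V_\alpha$. From $\langle e_n\rangle$ build a surjection $\omega \times \omega \to \bigcup_n X_n$, hence a surjection $\omega \to \bigcup_n X_n$, hence a surjection $g : \mathcal{P}(\omega) \to \mathcal{P}(\bigcup_n X_n)$ (compose with the induced injection on power sets, then surject). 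Composing with the relation $R$ and class $A$ from clause (2): each subset of $\bigcup_n X_n$ picks out at most one $a \in A$ via $Y \mapsto$ the unique $a$ with $\{x : xRa\} = Y$ if it exists; using $j$ again to handle the ``if it exists'' we get a partial surjection from $\mathcal{P}(\bigcup_n X_n)$ onto $A$ whose domain includes every $Y$ that \emph{is} realized, and since clause (3) gives a surjection $P_2 : A \to \aleph_2$, composing yields a surjection from $\mathcal{P}(\omega)$ onto $\aleph_2$. This contradicts $\mathrm{CH}$, which in $\mathrm{ZC}$ (with the coding $\aleph_2 \in \mathcal{P}^5(\omega)$) says $\mathcal{P}(\omega)$ has cardinality $\aleph_1 < \aleph_2$.

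The main obstacle I anticipate is part (1): checking that the \emph{internal} computations of $\mathcal{M}$ — what the sections $P_i$ look like, whether each $X_n$ is still countable, and whether $\aleph_2^{\mathcal{M}}$ genuinely equals $\omega_2^N$ — all agree with the external picture. This rests entirely on $\mathcal{M}$ having a true power set operator (\cref{ch}(2)) plus the end-extension structure $\mathcal{M}_{P,S} \subseteq_{\mathrm{end}} \mathcal{M}$, which together pin down $\mathcal{P}^k(\omega)$ and its canonical prewellorderings absolutely between $\mathcal{M}$ and $N$; once that absoluteness is nailed down the verification of each clause of $\psi$ is bookkeeping. Part (2) is comparatively routine: the only subtlety is making sure each invocation of $j$ stays within the Comprehension schema of $\mathrm{ZC} + \mathrm{GC}$ and that no use of Replacement sneaks in, but since every object built is a subset of a fixed $V_\alpha$ (because $\bigcup_n X_n$, the $e_n$'s, $R$, $A$, and $\aleph_2$ all have bounded rank in $\mathcal{M}$), Comprehension alone suffices.
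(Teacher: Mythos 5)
Your proposal follows essentially the same route as the paper: part (1) is read off from \cref{ch} together with the construction of $P$ in $N$ (the paper simply declares this ``immediate''), and part (2) uses the global choice function $j$ to pick surjections $\omega \to X_n$ uniformly in $n$, transfers the $R$-sections of elements of $A$ to subsets of $\omega \times \omega$, and composes with $P_2$ to obtain a partial surjection from $\mathcal{P}(\omega)$ onto $\aleph_2$, contradicting CH --- which is exactly the paper's map $F$.

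One step in your part (2) is wrong as written, though it is harmless because the argument never needs it: you claim the sequence $\langle e_n \rangle_{n<\omega}$ is a set ``since it is the image of $\omega$ under a class function defined from $j$'' --- that principle is precisely Replacement, which is unavailable --- and your fallback of collecting it inside ``a suitable $V_\alpha$'' fails because in a model of $\mathrm{ZC}+\psi$ the class $\bigcup_n X_n$ (and hence $\bigcup_n (\omega\times X_n)$) need not be contained in any set; indeed in $\mathcal{M}$ itself it is not. The repair is to treat $n \mapsto e_n$ purely as a definable class function (as the paper does with $f_n = j(\{f \in X_n^{\omega} : f \text{ surjective}\})$) and observe that only the \emph{final} relation, being a subclass of the set $\mathcal{P}(\omega\times\omega) \times \aleph_2$, needs to be a set, which the expanded Comprehension schema delivers. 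With that adjustment your argument is correct.
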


\begin{proof}
(1) Immediate from \cref{ch} and our construction of $P$ in $N.$

(2) Let $j$ be a global choice function. Assume $\psi.$ Let
$$f_n = j(\{f \in X_n^{\omega}: f \text{ is surjective}\}).$$

Define a partial surjective map $F$ from $\mathcal{P}(\omega \times \omega)$ to $\aleph_2$ by

\begin{align*}F = \{&(H, \alpha): \exists a \in C (P_2(a)=\alpha \wedge 
H=\{(m, n): f_n(m) R a\})\}.
\end{align*}
This violates CH.
\end{proof}

We conclude that $\varphi := (\mathrm{CH} \wedge \psi)$ is a counterexample to conservativity of Global Choice over ZC, proving \cref{gc}.

\section{Adding in Foundation}

We work in ZF. We begin by investigating a certain enrichment of digraphs:

\begin{mydef}A Depth-Ranked Extensional Digraph (DRED) to be a structure $H=(M, E, d, \langle r_i: i<\omega \rangle)$ such that:
\begin{enumerate}
\item $(M, E)$ is an extensional digraph,
\item For all $x E y \in M,$ $d(x) \le d(y)+1,$
\item If for all $z,$ $z E x$ implies $z E y,$ then $d(x) \le d(y) + 1,$
\item $r_i: d^{-1}(i) \rightarrow \mathrm{Ord}$ is\footnote{For clarification, $d^{-1}(i)$ is the preimage of the set $i,$ not to be confused with $d^{-1}(\{i\}).$} a rank function.
\end{enumerate}
\end{mydef}

%Let $G = (M,E, d, \langle r_i: i<\omega \rangle)$ be an extensional digraph equipped with a function $d: M\rightarrow \omega$ ($d$ standing for ``depth") and rank functions\footnote{For clarification, $d^{-1}(i)$ is the preimage of the set $i,$ not to be confused with $d^{-1}(\{i\}).$}
%$$r_i: d^{-1}(i) \rightarrow \aleph_1.$$
%We will refer to such a triple as a Depth-Ranked Extensional Digraph (DRED). Define the bounded deficiency:

\begin{mypro}\label{extfnd}
Let $H=(M, E, d, \langle r_i: i<\omega \rangle)$ be a DRED. Then $$(M, E) \models \mathrm{Extensionality} + \mathrm{Foundation}.$$
\end{mypro}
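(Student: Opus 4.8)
The plan is to obtain Extensionality for free and to reduce Foundation to the existence of the depth-indexed rank functions. Extensionality is clause (1) in the definition of a DRED, so nothing is required there. For Foundation, taken in the form ``every nonempty set has an $E$-minimal element,'' it suffices to show: for each $y \in M$ such that $P_y := \{x \in M : x\,E\,y\}$ is nonempty, there is $x_0 \in P_y$ for which no $z \in M$ satisfies both $z\,E\,x_0$ and $z\,E\,y$.

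Given such a $y$, I would first observe that, since the rank functions $r_i$ are indexed by $i < \omega$ and defined on the preimages $d^{-1}(i)$, the function $d$ takes values in $\omega$; hence one may fix a natural number $i$ with $i > d(y) + 1$. Clause (2) then gives, for every $x \in P_y$, that $d(x) \le d(y) + 1 < i$, i.e.\ $d(x) \in i$, so $P_y \subseteq d^{-1}(i)$. Since $r_i$ maps $P_y$ into $\mathrm{Ord}$ and $P_y \neq \emptyset$, I would let $x_0 \in P_y$ be such that $r_i(x_0)$ is least in $r_i[P_y]$.

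The final step is to check that $x_0$ is $E$-minimal in the required sense. Suppose toward a contradiction that some $z \in M$ has $z\,E\,x_0$ and $z\,E\,y$. From $z\,E\,y$ we get $z \in P_y \subseteq d^{-1}(i)$, and also $x_0 \in d^{-1}(i)$; so the defining property of the rank function $r_i$ (that $w\,E\,w'$ implies $r_i(w) < r_i(w')$ for $w, w' \in d^{-1}(i)$) applied to $z\,E\,x_0$ yields $r_i(z) < r_i(x_0)$, contradicting the minimality of $r_i(x_0)$ over $r_i[P_y]$ since $z \in P_y$. Hence no such $z$ exists, and $(M, E) \models \mathrm{Foundation}$.

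I do not expect a genuine obstacle. The one point worth emphasizing is that we never need — and in general cannot have — a single global rank function on all of $(M, E)$: Foundation for a given $y$ refers only to $y$'s $E$-predecessors and their $E$-predecessors, all of depth at most $d(y) + 1$, hence all inside one set $d^{-1}(i)$ on which $E$ is genuinely well-founded, so the countable family of partial rank functions is exactly what is needed. Clause (3) of the DRED definition plays no role in this proposition; it is included to let one propagate the depth and rank data through the later $\mathbb{V}_n$-style construction.
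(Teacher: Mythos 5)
Your proof is correct and is essentially the paper's own argument: Extensionality is immediate from clause (1), and Foundation is obtained by bounding the depths of the $E$-predecessors of the given element via clause (2), placing them all inside one $d^{-1}(i)$, and taking a predecessor of minimal $r_i$-rank, which is $E$-minimal by the rank-function property. Your added remarks (that $d$ is $\omega$-valued and that clause (3) is only needed later, for Power Set and Comprehension in the $\mathbb{V}_n$ construction) are consistent with the paper's intent.
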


\begin{proof}
Extensionality follows from $(M,E)$ being an extensional digraph.

Fix nonempty $x \in M.$ Let $n$ be such that for all $y E x,$ $d(y) < n.$
Let $y_0 E x$ be such that $r_n(y_0)$ is minimal. Then $y_0$ is an $E$-minimal
element of $x.$

\end{proof}

\begin{mydef}
Let $H=(M, E, d, \langle r_i: i<\omega \rangle)$ be a DRED. We define the \emph{bounded deficiency} of $H:$

\begin{align*}
    D_{\mathrm{bnd}}(H)&=\{X \in D(H): |f``(X)|<\aleph_0\}.\\
\end{align*}
\end{mydef}

\noindent Next, we will recursively define for $n \le \omega$ DREDs $$\mathbb{V}_n(H)=(M_n(H), E_n(H), d_n(H), \langle r_{n, i}(H) \rangle),$$
such that for all $m \le n$ and $i,$ we have:
\begin{enumerate}
\item $(M_m(H), E_m(H)) \subseteq_{\mathrm{end}} \mathbb{V}_n(H),$
\item $d_m = d_n \restriction M_m(H),$
\item $r_{m, i} = r_{n,i} \restriction M_m.$
\end{enumerate}
We only need to specify $d_{n+1}$ and $r_{n+1, i}$ on $M_{n+1}(H) \setminus M_n(H).$

\begin{mynot} Suppressing $H,$ we denote
\begin{alignat*}{2}
&M_0=\{0\} \times G &&E_0 = \{((0, x), (0, y)): x E y\} \\
&d_0(0, x)=(0, d(x)) \qquad && r_{0,i}(0, x) = (0,r_i(x))
\end{alignat*}
\begin{align*}
&M_{n+1} = M_n\cup (\{n+1\} \times D_{\mathrm{bnd}}(\mathbb{V}_n)) \\ 
&E_{n+1}=E_n \cup \{(z, (n+1, X)):D_{\mathrm{bnd}}(\mathbb{V}_n)\} \\ 
&d_{n+1}(n+1, x)= \max_{x \in X}(d_n(x)) \\
&r_{n+1,i}(n+1, X) = \sup_{x \in X} (r_{n, i}(x)+1)
\end{align*}
\begin{align*}
&M_{\omega}= \bigcup_{n<\omega} M_n \qquad E_{\omega}= \bigcup_{n<\omega} E_n 
\qquad d_{\omega}= \bigcup_{n<\omega} d_n \qquad r_{\omega, i} = \bigcup_{n<\omega} r_{n,i}.
\end{align*}
\end{mynot}

%\begin{align*}
%M_0&=\{0\} \times G \\
%E_0 &= \{((0, x), (0, y)): x E y\} \\
%d_0(0, x)&=(0, d(x)) \\
%r_{0,i}(0, x) &= (0,r_i(x)) \\
%M_{n+1} &= M_n \cup (\{n+1\} \times D_{\mathrm{bnd}}(\mathbb{V}_n(G))) \\
%E_{n+1}&=E_n(G) \cup \{(z, (n+1, X)): z \in X, X \in D_{\mathrm{bnd}}(\mathbb{V}_n(G))\} %\\
%d_{n+1}(n+1, X)&=\max_{x \in X}(d_n(x)) \\
%r_{n+1,i}(n+1, X) &= \sup_{x \in X} (r_{n, i}(x)) \\
%M_{\omega} &= \bigcup_{n<\omega} M_n \\
%E_{\omega} &= \bigcup_{n<\omega} E_n \\
%d_{\omega} &= \bigcup_{n<\omega} d_n \\
%r_{\omega, i} &= \bigcup_{n<\omega} r_{n,i}.
%\end{align*}

%\begin{align*}
%M_0(G)=\{0\} \times G, &\qquad E_0=\{((0, x), (0, y)): x E y\}
%\\ \\ M_{n+1}(G) = M_n(G) \cup &(\{n+1\} \times D(\mathbb{V}_n(G))),
%\\ E_{n+1}(G)=E_n(G) \cup &\{(z, (n+1, X)): z \in X, X \in D(\mathbb{V}_n(G))\}
%\\ \\ M_{\omega}(G)= \bigcup_{n<\omega} M_n(G), &\qquad E_{\omega}(G)= %\bigcup_{n<\omega} E_n(G)
%\end{align*}

\begin{mypro}For any DRED $H=(M, E, d, \langle r_i: i<\omega \rangle),$
\begin{enumerate}
\item $\mathbb{V}_{\omega}(H) \models \mathrm{Z}_{\mathrm{fnd}} - \mathrm{Infinity},$ and
\item $\mathbb{V}_{\omega}(H)$ has a true power set operator.
\end{enumerate}
\end{mypro}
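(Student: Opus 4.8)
The plan is to mirror the proof of \cref{zer} for Pairing, Union, Comprehension and Power Set, to read off Extensionality and Foundation from \cref{extfnd}, and to isolate the one genuinely new point: that the bounded-deficiency recursion stays inside the class of DREDs while still adjoining every subset needed to witness Comprehension or Power Set.

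The first step is an induction on $n\le\omega$ showing that $\mathbb{V}_n(H)$ is a DRED and that the stated compatibility clauses (end-extension, $d_m=d_n\restriction M_m$, $r_{m,i}=r_{n,i}\restriction M_m$) hold; the cases $n=0$ and $n=\omega$ are immediate from the definitions and the hypothesis on $H$. The successor step is the content. Extensionality of $(M_{n+1},E_{n+1})$ transfers as in the unbounded construction, the new vertices $(n+1,X)$ being genuine deficiency sets, hence pairwise distinct and distinct from the old vertices. For DRED axiom~(2): the new edges all point into some $(n+1,X)$, and $w\,E_{n+1}\,(n+1,X)$ means $w\in X$, so $d_{n+1}(n+1,X)=\max_{x\in X}d_n(x)\ge d_n(w)$; the constraint $|d_n``(X)|<\aleph_0$ is exactly what makes this $\max$ exist, which is the whole point of passing to $D_{\mathrm{bnd}}$. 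Axiom~(4) holds because $r_{n+1,i}(n+1,X)=\sup_{x\in X}(r_{n,i}(x)+1)$ strictly dominates $r_{n,i}$ on the members of $(n+1,X)$ that lie in $d_{n+1}^{-1}(i)$. Axiom~(3) is the delicate case: for $\{z:z\,E_{n+1}\,x\}\subseteq X$ one needs $d_{n+1}(x)\le\max_{x'\in X}d_n(x')+1$, which I would derive from the invariant — preserved because a new vertex receives depth equal to, not one more than, the maximal depth of its members — that in these DREDs every nonempty vertex has depth at most one more than the supremum of its members' depths. Unioning at $\omega$ then yields a DRED $\mathbb{V}_\omega(H)$, so by \cref{extfnd} its underlying digraph models Extensionality and Foundation.

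For the remaining axioms I would reproduce the proof of \cref{zer}, adding only the observation that every requested subset is a bounded deficiency: for a vertex $x$ the members $\{z:z\,E_\omega\,x\}$ meet only finitely many depths (this is where the DRED structure of $d$ is used), so for $x\in M_n(H)$ any $Y\subseteq\{z:z\,E_\omega\,x\}$ has $|d_n``(Y)|<\aleph_0$ and hence is either realized in $M_n(H)$ or adjoined as $(n+1,Y)$. This gives Comprehension; Pairing and Union go the same way (a pair, resp.\ a union, of vertices present by stage $n$ again spans finitely many depths and so is bounded); and, one stage higher exactly as in \cref{zer}, the set of representatives of all subsets of $x$ is itself a bounded deficiency, so $\mathcal{P}^{\mathbb{V}_\omega(H)}(x)=\mathcal{P}^V(\{z:z\,E_\omega\,x\})$ is a vertex — which is simultaneously Power Set and the true power set operator. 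I expect the main obstacle to be DRED axiom~(3) in the successor step, for a pair made of an old vertex and a freshly adjoined bounded-deficiency vertex: this is the only place the exact recipe for $d_{n+1}$ (the bare $\max$, in contrast to the $\sup$-with-``$+1$'' used for the rank functions) is forced on us, and it is what one must check in order to keep the DRED axioms an invariant of the construction.
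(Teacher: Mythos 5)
Your proposal follows the same route as the paper's proof: Extensionality and Foundation are read off from \cref{extfnd}, and the remaining axioms are verified exactly as in \cref{zer}, with the additional observation that the DRED depth conditions confine every required subset --- and the collection of all subsets of a fixed vertex --- to finitely many depths, so that the restriction to $D_{\mathrm{bnd}}$ costs nothing. The paper's proof is precisely this four-line verification (Pairing and Comprehension need only condition 2 to bound members' depths by $d(x)+1$; Union uses condition 2 twice to get the bound $d(x)+2$; the power set stage uses condition 3 to bound the depths of already-realized subset-representatives). Where you go beyond the paper is in checking that the recursion preserves the DRED axioms, which the paper asserts without proof; you correctly isolate condition 3 at successor stages as the delicate clause.

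One caution on that extra step. The invariant you invoke --- every nonempty vertex has depth at most one more than the maximum depth of its members --- is indeed preserved by the recursion (new vertices receive the bare maximum), but it does not follow from DRED axioms 1--4 for the base structure $H$: axiom 3 only bounds $d(x)$ by $d(y)+1$ for vertices $y$ \emph{already present} whose extension contains that of $x$, and there need be no such $y$ of small depth. Without the invariant at the base, condition 3 can genuinely fail at stage $1$ (an old vertex of large depth whose members all lie in a freshly adjoined bounded deficiency of depth $0$), and the same phenomenon threatens Power Set at newly adjoined vertices, since old representatives of their subsets need not have bounded depth. So your invariant should be stated as an additional hypothesis on $H$ (it is easily checked for the $H_S$ and $H_{P,S}$ to which the proposition is actually applied), or verified for the base case before the induction starts. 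The paper's proof silently has the same gap; your write-up surfaces it but does not close it.
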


\begin{proof} Extensionality and Foundation follow from \cref{extfnd}.

Pairing: For $x_0, x_1 \in M_n(H),$ $\{x_0, x_1\} \in M_{n+1}(H).$

Union: For $z E y E x \in M_n(H),$ $d(z) \le d(x) + 2$ by DRED condition 2,
so $\bigcup x \in M_{n+1}(H).$

Comprehension and Power Set: For $x \in M_n(H),$ every subset of $x$ is in $M_{n+1}(H)$ by DRED condition 3. Then $\mathcal{P}^{\mathbb{V}_{\omega}(H)}(x)=\mathcal{P}^V(x) \in M_{n+2}(H).$
\end{proof}

We again work in $N$ as in \cref{modeln}. The remaining constructions are  analogues for the latter half of Section 3 in terms of DREDs, and we omit the straightforward verifications.

Let $a_x = (V_{\omega \cdot 2}, 0, x)$ and $b_x = (V_{\omega \cdot 2}, 1, x).$ Identify
$a_{(i, x)}=a_{i,x},$ $b_{(i, x)}=b_{i,x},$ and define
$$h(a_{i, x})=a_{i+1, x}, h(b_{i,x})=b_{i+1, x}.$$
Define
\begin{align*}
B_i = &\{\{a_{i, 0,x}: x \in X_n\}: n<\omega\}\cup \{\{a_{i, 1, Y}\}: Y \subset X\}
\\B=&\bigcup_{i<\omega} B_i
\\P = &\{(0, n, a_{0, 0,x}): x \in X_n\} \cup \{(1, a_{0, 0,x},a_{0, 1,Y}): x \in Y \subset X\} \cup
\\  &\{(2, a_{0, 1,Y}, f(Y)): Y \subset X\}.
\end{align*}

The following is our DRED analogue of $G_S$ in \cref{badnot}:

\begin{mynot}For $S \in [B]^{<\omega},$ we make the following constructions:

\begin{align*}
M_S &= V_{\omega+5} \cup \bigcup S
\\ E_S &= \in \restriction V_{\omega + 5} \cup \left \{(h(a), a): a \in \bigcup S \right \}
\\ d_S(y) &= 0: \forall y \in V_{\omega+5}
\\ d_S(a_{i,x}) &= i+1
\\ r_{S, i}(y) &= \mathrm{rk}^{V_{\omega \cdot 2}}(y): \forall y \in V_{\omega+5}
\\ r_{S, i}(a_{j, x})&= i-j
\\ H_S &= (M_S, E_S, d_S, \langle r_{S,i} \rangle)
\end{align*}

\end{mynot}

Next we adjoin the data of $P$ and proceed analogously to the construction of $\mathcal{M}_{P,S}$ in \cref{badnot}:
\begin{mynot}
\begin{align*}
\\ \\ M_{P, S} &= M_5(H_S) \cup \{b_{j,p}: p \in P \cap M_5(H_S), j<\omega\}
\\E_{P, S} &= E_5(H_S) \cup \bigcup_{p \in P \cap M_5(H_S)} 
\{(p, b_{j,p}), (b_{j+1, p}, b_{j,p})\}
\\d_{P,S}(p, b_{j,p})&=d_5(p)+j
\\r_{P, S, i} \restriction M_5 &= r_{S, 5}
\\r_{P, S,i}(p, b_{j,p})&=r_{S, i}(p) + (i-j)
\\ \\H_{P, S}&=(M_{P,S}, E_{P,S}, d_{P,S}, \langle r_{P, S, i} \rangle)
\\ \mathcal{M}_{P,S}^{\mathrm{DRED}} &= \mathbb{V}_{\omega}(H_{P,S}).
\end{align*}
\end{mynot}

\noindent Since GCH holds below $\aleph_{\omega},$ it is easy to check that
for all $n,$
\begin{align*}
    |M_n(H_S)|&=\aleph_{n+5} \\ |M_n(H_{P,S})|&=\aleph_{n+10}.
\end{align*}

\begin{mypro} For any $S \in [B]^{<\omega},$
\begin{enumerate}
\item $$\mathcal{M}_{P,S}^{\mathrm{DRED}} \models \mathrm{ZC}_{\mathrm{fnd}} + GCH + \forall X, \exists n (|X \cup \omega| = \aleph_n),$$
\item \begin{align*}
\mathcal{M}_{P,S}^{\mathrm{DRED}} \models P \cap M_5(H_S) = &\{p: \exists b \forall n
\exists \langle b_i: i<n \rangle \\ &(b=b_0 \wedge \forall j<i (b_j=\{b_{j+1}, p\} ))\}. \end{align*}
\end{enumerate}
\end{mypro}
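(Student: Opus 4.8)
The plan is to mirror the proof of \cref{gch} exactly, since the DRED construction was designed precisely so that the analogue of \cref{zer} goes through. The only new ingredient relative to \S3 is Foundation, which is now supplied by \cref{extfnd}, and the only new subtlety is that we must track the depth and rank data carefully enough to see that the new digraph is still a DRED at each stage; but the previous proposition already asserts $\mathbb{V}_\omega(H) \models \mathrm{Z}_{\mathrm{fnd}} - \mathrm{Infinity}$ with a true power set operator for any DRED $H$, so once we know $H_{P,S}$ is a DRED, part (1) follows verbatim from the $\S3$ argument: Infinity is witnessed by $\omega \in V_{\omega+5}$, and the true power set operator transfers GCH and the cardinality computation $|M_n(H_{P,S})| = \aleph_{n+10}$ up from $N$. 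So the first key step is simply to check that $H_{P,S}$ satisfies the four DRED axioms — that $E_{P,S}$ is extensional, that $d_{P,S}$ drops by at most one along $E_{P,S}$ (this is why the extra atoms $a_{i,x}$, $b_{j,p}$ carry a depth parameter that increments with $i$, $j$), that the "if all $E$-predecessors of $x$ are $E$-predecessors of $y$ then $d(x) \le d(y)+1$" condition holds, and that each $r_{P,S,i}$ restricted to $d_{P,S}^{-1}(i)$ is an ordinal-valued rank function. The differences $i - j$ appearing in the rank definitions are exactly what makes this last point work on the self-membered atoms, which have no well-founded rank but do have finite depth, so they get assigned finite depths and the $r_i$'s only ever see them via the $D_{\mathrm{bnd}}$ operator.

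The second key step is part (2), the coding claim, which is the DRED analogue of \cref{gch}(2). Here the formula $\exists b\, (b = \{b,p\} \wedge b \ne p)$ from \S3 is replaced by a formula asserting the existence of an $\omega$-indexed "chain" $b = b_0$, $b_j = \{b_{j+1}, p\}$ — this is because, having replaced the Quine atoms $b_p$ by infinite descending chains $b_{j,p}$ (so that the structure can remain well-founded), the property "$p$ is in the range of $P$" is now witnessed not by a self-membered set but by the existence of this infinite chain. The argument is: since $D_{\mathrm{bnd}}$ (like $D$ in \S3) never creates sets that lie in infinite membership chains of this shape, the only $p$ for which such a chain exists are exactly the $p \in P$ that already had such chains built in, namely $p \in P \cap M_5(H_S)$; and conversely the coding $\aleph_2 \subseteq V_{\omega+5} \subseteq M_S$ guarantees $P \cap M_\omega(H_{P,S}) = P \cap M_5(H_S)$, just as in \cref{gch}(2). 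One has to be slightly careful that the bounded deficiency operator, with its finiteness restriction $|f``X| < \aleph_0$, doesn't interfere — but it only ever restricts which subsets get names, never introduces new self-referential structure, so the same reasoning applies.

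The main obstacle I expect is bookkeeping rather than conceptual: verifying DRED condition (2), $d(x) \le d(y)+1$ whenever $x\,E\,y$, for the various kinds of edges in $E_{P,S}$ — the old $\in$-edges inside $V_{\omega+5}$ (depth $0$, trivially fine), the edges $(h(a),a)$ encoding the atom tower (depths $i{+}2$ and $i{+}1$, fine), and the edges $(p,b_{j,p})$ and $(b_{j+1,p},b_{j,p})$ into the chain names (depths governed by $d_5(p)$ and $d_5(p)+j$, which one must confirm differ by at most one) — and then checking that applying $\mathbb{V}_n$ preserves all of this, which is exactly the content of the "suppressing $H$" notation block but needs to be seen to be consistent with the explicit $d_{P,S}$, $r_{P,S,i}$ formulas. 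Since the paper explicitly says it omits "the straightforward verifications" for these analogues, the proof of this proposition will itself be short, citing the previous proposition for part (1) and reproducing the \cref{gch}(2) argument with the chain formula for part (2); the depth-rank checks are relegated to the reader with a remark that they are immediate from the definitions.
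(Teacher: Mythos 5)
Your proposal is correct and follows the paper's route exactly: the paper's own proof is literally ``proven analogously to the Section 3 proposition,'' and your outline (check $H_{P,S}$ is a DRED so the DRED analogue of the $\mathbb{V}_\omega$ proposition plus the Foundation lemma gives part (1), then rerun the coding argument with the finite-chain formula replacing the Quine-atom formula and note that $D_{\mathrm{bnd}}$ only removes potential witnesses) is precisely the intended verification, filled in at a level of detail the paper itself omits.
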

\begin{proof} This is proven analogously to \cref{gch}.
\end{proof}

\noindent Finally, we construct $\mathcal{M}^{\mathrm{DRED}}:$
\begin{mynot}
\begin{align*}
\overline{M} = \bigcup_{S \in [B]^{<\omega}} M_{\omega}(H_{P,S})
&\qquad \overline{E} = \bigcup_{S \in [B]^{<\omega}} E_{\omega}(H_{P,S})
\\ \mathcal{M}^{\mathrm{DRED}} &=(\overline{M}, \overline{E}).
\end{align*}
\end{mynot}

\noindent Notice that if $S \subseteq T,$ then
$$\mathcal{M}_{P,S}^{\mathrm{DRED}} \subseteq_{\mathrm{end}} \mathcal{M}_{P,T}^{\mathrm{DRED}} \subseteq_{\mathrm{end}} \mathcal{M}^{\mathrm{DRED}}.$$

\begin{mypro} \phantom{x}
\begin{enumerate}
\item $$\mathcal{M}^{\mathrm{DRED}} \models \mathrm{ZC}_{\mathrm{fnd}}+ \mathrm{GCH} + \forall X, \exists n (
|X \cup \omega| = \aleph_n),$$ 
\item \qquad \qquad $\mathcal{M}^{\mathrm{DRED}}$ has a true power set operator,
\item \begin{align*}
\mathcal{M}^{\mathrm{DRED}} \models P = &\{p: \exists b \forall n
\exists \langle b_i: i<n \rangle \\ &(b=b_0 \wedge \forall j<i (b_j=\{b_{j+1}, p\} ))\}. \end{align*}
\end{enumerate}
\end{mypro}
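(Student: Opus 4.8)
The plan is to run the proof of \cref{ch} again with DREDs in place of extensional digraphs. The two inputs are the immediately preceding proposition (the DRED analogue of \cref{gch}) and the directed-system remark recorded just after the definition of $\mathcal{M}^{\mathrm{DRED}}$: for $S\subseteq T$ in $[B]^{<\omega}$ one has $\mathcal{M}_{P,S}^{\mathrm{DRED}}\subseteq_{\mathrm{end}}\mathcal{M}_{P,T}^{\mathrm{DRED}}\subseteq_{\mathrm{end}}\mathcal{M}^{\mathrm{DRED}}$, and consequently any finitely many elements of $\overline M$ already lie in a common $M_\omega(H_{P,S})$ (take $S$ to be the union of the finitely many witnessing finite sets, which is again in $[B]^{<\omega}$). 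The soft fact doing all the work, exactly as in \cref{ch}, is that passing to an end-extension does not change $\overline E$-predecessors: for $x\in M_\omega(H_{P,S})$ the class $\{y:y\,\overline E\,x\}$ equals its $E_\omega(H_{P,S})$-version, so any witness to Pairing, Union, being a subset, being a bijection, or being a well-ordering found inside $\mathcal{M}_{P,S}^{\mathrm{DRED}}$ remains a witness in $\mathcal{M}^{\mathrm{DRED}}$.

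Granting this, each clause is routine. For every axiom of $\mathrm{ZC}_{\mathrm{fnd}}$ except Extensionality and Foundation, and for the clause $\forall X\,\exists n\,(|X\cup\omega|=\aleph_n)$, one fixes an $S$ containing the relevant parameters, applies the preceding proposition inside $\mathcal{M}_{P,S}^{\mathrm{DRED}}$, and transports the witness up by end-extension, mirroring the Pairing computation in the proof of \cref{ch}. Part (2), a true power set operator, is immediate from part (2) of the preceding proposition together with correctness of $\{y:y\,\overline E\,X\}$ in any $\mathcal{M}_{P,S}^{\mathrm{DRED}}$ containing $X$. GCH then follows as in the per-$S$ case: every element of $\overline M$ lies in some $M_n(H_{P,S})$, of cardinality $\aleph_{n+10}<\aleph_\omega$, so via the true power set operator $\mathcal{M}^{\mathrm{DRED}}$ inherits $2^{\aleph_n}=\aleph_{n+1}$ from GCH below $\aleph_\omega$ in $N$. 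Part (3) is the DRED version of part (3) of \cref{ch}: since $D_{\mathrm{bnd}}$ never produces an element admitting arbitrarily long descending $\{\cdot,p\}$-chains, the only solutions of the displayed formula are the adjoined $b_{0,p}$, one for each $p\in P\cap M_5(H_S)$, and taking the union over $S$ recovers all of $P$ (using $\aleph_2\subseteq V_{\omega+5}\subseteq M_S$ and that each element of $P$ enters some $M_5(H_S)$).

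The one ingredient with no analogue in Section 3 is Extensionality $+$ Foundation for $\mathcal{M}^{\mathrm{DRED}}=(\overline M,\overline E)$, which is presented as a bare digraph. The plan is to reinstate a DRED structure on it: the compatibility conditions on the $d_n$ and $r_{n,i}$, together with the inclusions $H_{P,S}\subseteq_{\mathrm{end}}H_{P,T}$ holding at the level of DREDs for $S\subseteq T$, make $\overline d:=\bigcup_S d_\omega(H_{P,S})$ and $\overline{r_i}:=\bigcup_S r_{\omega,i}(H_{P,S})$ well-defined functions. One checks that $(\overline M,\overline E,\overline d,\langle\overline{r_i}\rangle)$ is a DRED: conditions (2)--(4) are local and pass up from the individual $H_{P,S}$, while condition (1), extensionality of $(\overline M,\overline E)$, is immediate from end-extension and extensionality of each $\mathcal{M}_{P,S}^{\mathrm{DRED}}$; then \cref{extfnd} gives Extensionality and Foundation. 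I expect the only point requiring genuine care is precisely this coherence check -- confirming that for $S\subseteq T$ the passage $S\mapsto H_{P,S}$ is end-extending with its depth and rank data (so that $\mathbb{V}_\omega$ preserves end-extension and the ranks glue into one DRED structure on $\mathcal{M}^{\mathrm{DRED}}$). Once that is in place, every remaining clause is the same bookkeeping as in the proof of \cref{ch}, with $D_{\mathrm{bnd}}$ in place of $D$ and DREDs in place of extensional digraphs.
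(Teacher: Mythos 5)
Your proposal is correct and follows essentially the same route as the paper, which proves this proposition exactly by rerunning the argument for \cref{ch}: fix finitely many parameters, find a common $S \in [B]^{<\omega}$, apply the per-$S$ proposition, and transport witnesses along the end-extensions $\mathcal{M}_{P,S}^{\mathrm{DRED}} \subseteq_{\mathrm{end}} \mathcal{M}^{\mathrm{DRED}}$. The only place you do extra work is Foundation, where the detour through gluing the depth and rank functions into a single DRED structure and invoking \cref{extfnd} is harmless but not needed: since end-extension does not add $\overline{E}$-predecessors to elements of $M_{\omega}(H_{P,S})$, an $E$-minimal element of a nonempty $x$ found inside $\mathcal{M}_{P,S}^{\mathrm{DRED}}$ remains $\overline{E}$-minimal in $\mathcal{M}^{\mathrm{DRED}}$, so Foundation transfers by the same locality argument as the other axioms.
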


\begin{proof}
This is proven analogously to \cref{ch}.

\end{proof}

Let $\psi$ assert that the class
$$P := \{p: \exists b \forall n \exists \langle b_i: i<n \rangle (b=b_0 \wedge \forall j<i (b_j=\{b_{j+1}, p\} ))\}$$
has sections $P_i$ with the following properties:
\begin{enumerate}
\item $P_0$ is a sequence $\langle X_n \rangle$ of countable sets, 
\item $P_1$ codes a binary relation $R$ and a class $A$ of sets such that for all $a_0, a_1 \in A,$ if
$$\left \{x \in \bigcup_n X_n: x \, R \, a_0 \right \}= \left \{x \in \bigcup_n X_n: x \, R \, a_1 \right \},$$
then $a_0=a_1,$
\item $P_2$ is a surjection from $A$ onto $\aleph_2.$
\end{enumerate}

\begin{mypro}\phantom{x} 
\begin{enumerate}
\item $$\mathcal{M}^{\mathrm{DRED}} \models \mathrm{ZC}_{\mathrm{fnd}} + (\mathrm{CH} \wedge \psi),$$
\item $$\mathrm{ZC}_{\mathrm{fnd}} + \mathrm{GC} \vdash \neg(\mathrm{CH} \wedge \psi).$$
\end{enumerate}
\end{mypro}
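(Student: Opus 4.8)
The plan is to mirror exactly the proof of \cref{final}, replacing each ingredient by its DRED analogue. For part (1), the key input is the preceding proposition stating $\mathcal{M}^{\mathrm{DRED}} \models \mathrm{ZC}_{\mathrm{fnd}} + \mathrm{GCH} + \forall X \exists n (|X \cup \omega| = \aleph_n)$ together with the fact that the class $P$, defined now by the more elaborate ``infinite descending $b$-chain'' formula, is correctly identified. The formula $\exists b \forall n \exists \langle b_i : i < n \rangle (b = b_0 \wedge \forall j < i (b_j = \{b_{j+1}, p\}))$ is the DRED-appropriate substitute for the Quine-atom condition $\exists b (b = \{b, p\})$: since the DRED construction never produces genuine Quine atoms (the $b$-points are split into an $\omega$-chain $b_{0,p}, b_{1,p}, \dots$ with $b_{j,p} = \{b_{j+1,p}, p\}$), this ``arbitrarily long finite descending chain'' assertion is exactly what isolates the elements $p \in P$. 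So the verification of (1) is: invoke the proposition to get the theory and the identification of $P$, then note that by construction in $N$ the three sections $P_0, P_1, P_2$ have the stated properties (sequence of countable sets $X_n$; binary relation $R$ and class $A = \{a_{0,1,Y} : Y \subset X\}$ of sets with the injectivity-mod-$\bigcup_n X_n$ property; surjection $P_2$ from $A$ onto $\aleph_2$), and CH holds by GCH.

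For part (2), I would argue just as in \cref{final}(2). Work in $\mathrm{ZC}_{\mathrm{fnd}} + \mathrm{GC}$, let $j$ be a global choice function, and assume $\mathrm{CH} \wedge \psi$. Using $j$, pick for each $n$ a surjection $f_n = j(\{f \in X_n^\omega : f \text{ surjective}\})$ (nonempty since $X_n$ is countable). Then define the partial surjection $F$ from $\mathcal{P}(\omega \times \omega)$ onto $\aleph_2$ by $F = \{(H, \alpha) : \exists a \in A\, (P_2(a) = \alpha \wedge H = \{(m,n) : f_n(m)\, R\, a\})\}$; this is a total-enough map because for every $\alpha < \aleph_2$ we can pick (via $j$ again, or just by surjectivity of $P_2$) some $a \in A$ with $P_2(a) = \alpha$, and the $R$-neighborhood of $a$ inside $\bigcup_n X_n$ pulls back along the $f_n$'s to a subset of $\omega \times \omega$; distinct $\alpha$ require distinct $a$ which by the injectivity property of $A$ have distinct $R$-neighborhoods, hence distinct preimages $H$. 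Since $|\mathcal{P}(\omega \times \omega)| = |\mathcal{P}(\omega)| = \aleph_1$ under CH, a surjection onto $\aleph_2$ is a contradiction. This establishes $\mathrm{ZC}_{\mathrm{fnd}} + \mathrm{GC} \vdash \neg(\mathrm{CH} \wedge \psi)$.

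The only genuinely new point over \cref{final} — and hence the step I expect to be the main obstacle, though it is more bookkeeping than difficulty — is checking that the replacement of Quine atoms by $b$-chains does not disturb anything: one must confirm that in the DRED construction the ``descending chain'' formula is satisfied \emph{exactly} by the $p \in P$ (no spurious solutions arise from the deficiency-closure process, which is guaranteed because $D_{\mathrm{bnd}}$ never creates self-membered sets and, more to the point, never creates infinite descending $\in$-towers since it builds a well-founded structure), and that the sections $P_i$ remain definable and correct after passing to $\mathcal{M}^{\mathrm{DRED}}$. Both of these are handled by the end-extension chain $\mathcal{M}_{P,S}^{\mathrm{DRED}} \subseteq_{\mathrm{end}} \mathcal{M}^{\mathrm{DRED}}$ and the preceding propositions, so the proof is:

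\begin{proof}
(1) Immediate from the preceding proposition and our construction of $P, P_0, P_1, P_2$ in $N$: the class $P$ is as displayed, $P_0 = \langle X_n \rangle$ is a sequence of countable sets, $P_1$ codes the relation $R$ and the class $A = \{a_{0,1,Y} : Y \subset X\}$, on which $a_0 \neq a_1$ implies the $R$-neighborhoods within $\bigcup_n X_n$ differ, and $P_2(a_{0,1,Y}) = f(Y)$ gives a surjection onto $\aleph_2$. CH follows from GCH.

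(2) Let $j$ be a global choice function and assume $\mathrm{CH} \wedge \psi$. For each $n$ let
$$f_n = j(\{f \in X_n^{\omega} : f \text{ is surjective}\}),$$
which is well-defined since $X_n$ is countably infinite. Define a partial surjection $F$ from $\mathcal{P}(\omega \times \omega)$ onto $\aleph_2$ by
\begin{align*}
F = \{&(H, \alpha) : \exists a \in A\, (P_2(a) = \alpha \wedge H = \{(m,n) : f_n(m)\, R\, a\})\}.
\end{align*}
Since $P_2$ maps $A$ onto $\aleph_2$, $F$ is onto $\aleph_2$; and by the injectivity property of $A$ from $\psi(2)$, $F$ is single-valued. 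This contradicts CH, as $|\mathcal{P}(\omega \times \omega)| = \aleph_1$.
\end{proof}

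We conclude that $\mathrm{CH} \wedge \psi$ is a counterexample to conservativity of Global Choice over $\mathrm{ZC}_{\mathrm{fnd}}$, proving \cref{gcfund}.
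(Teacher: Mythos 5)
Your proposal is correct and is essentially the paper's proof: the paper simply states that this proposition is proven analogously to \cref{final}, and your argument is exactly that analogy spelled out (same choice of $f_n$ via the global choice function, same partial surjection $F$ onto $\aleph_2$ contradicting CH, with part (1) read off from the preceding proposition and the construction of $P$ in $N$). Your added remarks on why the descending-chain formula replaces the Quine-atom condition and on single-valuedness of $F$ are accurate but not a departure from the paper's route.
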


\begin{proof} This is proven analogously to \cref{final}.
\end{proof}

We conclude that $\varphi_{\text{fnd}} := (\mathrm{CH} \wedge \psi)$ is a counterexample to conservativity of Global Choice over $\mathrm{ZC}_{\mathrm{fnd}},$ proving \cref{gcfund}.

\section{Failure of the Finite Union Principle}

We once again work in ZF. Recall that $\aleph(x)$ is defined to be the least ordinal which does not inject into $x$ and $\aleph^*(x)$ the least ordinal $\kappa$ such that there is no surjection from $x$ to $\kappa.$
We use the following choiceless notion of hereditary classes:

\begin{mydef}
For infinite $A,$ define
$$H'(A)= \{x: \forall y \in TC(\{x\}) (|y| \le |A|) \}.$$
\end{mydef}

The following generalizes the choiceless proof in \cite{jech} that the class of hereditarily countable sets ($H'(\omega)$ in the above notation) is a set:

\begin{mypro}\label{lind} For infinite $A,$ define
$$H'(A)= \{x: \forall y \in TC(\{x\}) (|y| \le |A|) \}.$$ Then $H'(A) \subset V_{\aleph^*(A)^+}.$ In particular, $H'(A)$ is a transitive set.
\end{mypro}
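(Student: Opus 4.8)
The plan is to prove, by $\in$-induction, that $\aleph^*(TC(\{x\})) \le \aleph^*(A)^+$ for every $x \in H'(A)$. Granting this, note that for a transitive set $T$ the rank function $\mathrm{rk}$ maps $T$ onto the ordinal $\mathrm{rk}(T) := \sup\{\mathrm{rk}(t)+1 : t \in T\}$, so $TC(\{x\})$ cannot surject onto $\aleph^*(A)^+$, hence $\mathrm{rk}(TC(\{x\})) < \aleph^*(A)^+$ and therefore $\mathrm{rk}(x) < \aleph^*(A)^+$, i.e. $x \in V_{\aleph^*(A)^+}$. Then $H'(A) = \{x \in V_{\aleph^*(A)^+} : \forall y \in TC(\{x\})\,(|y|\le|A|)\}$ is a set by Separation, and it is transitive since $z \in x \in H'(A)$ gives $TC(\{z\}) \subseteq TC(\{x\})$, hence $z \in H'(A)$. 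Throughout write $\kappa = \aleph^*(A)$; recall $\kappa$ is an infinite cardinal and $A$ does not surject onto $\kappa$, so no surjective image of $A$ which is a set of ordinals has order type $\ge \kappa$ (cap a putative surjection onto such an order type at $\kappa$).

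The technical core is the sub-lemma: \emph{if $A$ does not surject onto $\kappa$, then $A \times \kappa$ does not surject onto $\kappa^+$.} Suppose $g : A \times \kappa \to \kappa^+$ were surjective, and for $\beta < \kappa$ put $g_\beta = g(\cdot,\beta) : A \to \kappa^+$, so $\kappa^+ = \bigcup_{\beta<\kappa}\mathrm{ran}(g_\beta)$. Each $\mathrm{ran}(g_\beta)$ is a set of ordinals, hence canonically well-ordered, and as a surjective image of $A$ it has order type $< \kappa$, so its transitive collapse is a canonical injection $c_\beta : \mathrm{ran}(g_\beta) \hookrightarrow \kappa$. Thus $U := \{(\beta,\alpha) : \beta<\kappa,\ \alpha \in \mathrm{ran}(g_\beta)\}$ injects into $\kappa \times \kappa$ via $(\beta,\alpha)\mapsto(\beta,c_\beta(\alpha))$, and $\kappa\times\kappa$ is canonically in bijection with $\kappa$ (Gödel pairing), so $|U| \le \kappa$; on the other hand $(\beta,\alpha)\mapsto\alpha$ surjects $U$ onto $\bigcup_{\beta<\kappa}\mathrm{ran}(g_\beta)=\kappa^+$. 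Hence $\kappa$ surjects onto $\kappa^+$, and sending each $\eta<\kappa^+$ to the least element of its preimage injects $\kappa^+$ into $\kappa$, contradicting that $\kappa^+$ is the successor cardinal of $\kappa$. No choice is used: every map invoked (transitive collapse, Gödel pairing, least preimage in a well-order) is canonical.

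For the induction, fix $x \in H'(A)$ and assume $\aleph^*(TC(\{y\})) \le \kappa^+$ for every $y \in x$ (legitimate since $y \in x \in H'(A)$ implies $y \in H'(A)$, and $\in$-rank drops). Using $TC(\{x\}) = \{x\} \cup \bigcup_{y\in x} TC(\{y\})$, suppose toward a contradiction that $g : TC(\{x\}) \twoheadrightarrow \kappa^+$. For each $y \in x$, $g[TC(\{y\})]$ is a set of ordinals and a surjective image of $TC(\{y\})$, so by the inductive hypothesis it has order type $< \aleph^*(TC(\{y\})) \le \kappa^+$, i.e. $\le \kappa$, and its transitive collapse gives a canonical injection $d_y : g[TC(\{y\})] \hookrightarrow \kappa$. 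Then $W := \{(y,\alpha) : y\in x,\ \alpha \in g[TC(\{y\})]\}$ injects into $x \times \kappa$ via $(y,\alpha)\mapsto(y,d_y(\alpha))$, hence into $A\times\kappa$ since $x \in TC(\{x\})$ forces $|x|\le|A|$; and $(y,\alpha)\mapsto\alpha$ surjects $W$ onto $\bigcup_{y\in x} g[TC(\{y\})]$, which is $\kappa^+$ or $\kappa^+\setminus\{g(x)\}$ and so, being a set of ordinals of cardinality $\kappa^+$, has order type $\kappa^+$ and surjects onto $\kappa^+$. Composing, $A\times\kappa$ surjects onto $\kappa^+$, contradicting the sub-lemma. (The case $x=\emptyset$ is trivial, since then $TC(\{x\})=\{x\}$ does not surject onto $\kappa^+$.) This closes the induction and the proof.

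The one real obstacle is choicelessness: the naïve moves — uniformly picking injections $TC(\{y\}) \hookrightarrow A$ to bound $|TC(\{y\})|$, or inverting surjections onto ordinals by choosing preimages — are unavailable, so all size comparisons must be routed through $\aleph^*$ and only well-orders may be inverted, which can be done canonically via transitive collapse and Gödel pairing. The remaining points are routine: the identity $TC(\{x\}) = \{x\}\cup\bigcup_{y\in x}TC(\{y\})$, the fact that $\mathrm{rk}$ maps a transitive set onto its rank, and the bookkeeping around the possibly-omitted value $g(x)$.
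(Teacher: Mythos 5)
Your framing (reduce to bounding ordinal-valued images of $TC(\{x\})$, then use that $\mathrm{rk}$ maps a transitive set onto its rank) and your sub-lemma are fine, but the inductive step has a genuine gap at the sentence ``it has order type $< \aleph^*(TC(\{y\})) \le \kappa^+$, i.e.\ $\le \kappa$, and its transitive collapse gives a canonical injection $d_y : g[TC(\{y\})] \hookrightarrow \kappa$.'' Order type $< \kappa^+$ does not mean order type $\le \kappa$: it only means the order type is some ordinal $\gamma_y$ with $\kappa \le \gamma_y < \kappa^+$ possible, and the transitive collapse then lands in $\gamma_y$, not in $\kappa$. To push further into $\kappa$ you would need injections $\gamma_y \hookrightarrow \kappa$ chosen uniformly in $y \in x$, and that is exactly a choice-like step that ZF does not supply: a uniform family of injections of all ordinals $< \kappa^+$ into $\kappa$ yields an injection of $\kappa^+$ into $\mathcal{P}(\kappa \times \kappa)$, which already for $\kappa = \omega$ is consistently false (e.g.\ when $\omega_1 \not\hookrightarrow \mathbb{R}$). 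Worse, the inductive hypothesis you carry is intrinsically too weak to repair this: from ``each $g[TC(\{y\})]$ has order type $< \kappa^+$'' together with $|x| \le |A|$ one cannot derive a contradiction, because in ZF the successor cardinal $\kappa^+$ may have cofinality $\omega$, so $\kappa^+$ can genuinely be a union of an $\omega$-indexed (hence possibly $A$-indexed) family of sets of ordinals each of order type $< \kappa^+$. So no argument from your stated hypotheses can close the induction; the failure is not just in the wording.

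The repair is essentially the paper's proof: the invariant carried through the recursion must be a bound at $\kappa$, not at $\kappa^+$ --- concretely, uniformly defined surjections from $\kappa$ onto the ordinal-valued images, rather than the bare statement that $\kappa^+$ is not a surjective image. The paper recursively defines canonical surjections $f_{x,n} : \kappa \to \mathrm{rk}``(\bigcup^n \{x\})$ for all $x \in H'(A)$ simultaneously, by recursion on the level $n$ of the transitive closure: at each step the new layer is an $x$-indexed, hence $A$-indexed, family, so each slice $\{f_{y,n}(\alpha) : y \in x\}$ is a surjective image of $A$ and therefore has order type $< \kappa = \aleph^*(A)$; its collapse lands inside $\kappa$, and Gödel pairing $\kappa \simeq \kappa \times \kappa$ (and $\kappa \simeq \kappa \times \omega$ at the end) amalgamates the slices. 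Because the witnessing surjections are defined uniformly, no choice is needed, and the $\kappa$-bound (rather than a $\kappa^+$-bound) is preserved at every stage, which is precisely what your version loses.
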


\begin{proof}
Let $\kappa=\aleph^*(A).$ It suffices to find a surjection from $\kappa$ onto $\text{rk}``(TC(\{x\})$ for $x \in H'(A).$

We will recursively define surjections $f_{x,n}: \kappa \rightarrow \text{rk}``(\bigcup^n \{x\})$ for all $x \in H'(A).$ Suppose we have defined all such functions up through $n.$ Define a surjection $g_{x, n+1}: \kappa \times x \rightarrow \text{rk}``(\bigcup^{n+1} \{x\})$ by $(\alpha, y) \mapsto f_{y,n}(\alpha).$ For each $\alpha, \text{ot}(g_{x, n+1}``(\{\alpha\} \times x))<\kappa,$ yielding a canonical surjection $f_{x, n+1}: \kappa \simeq \kappa \times \kappa \rightarrow \text{rk}``(\bigcup^{n+1} \{x\}).$ This completes the recursion, and now we define a surjection $f_x: \kappa \simeq \kappa \times \omega \rightarrow \text{rk}``(TC(\{x\}))$ by $(\alpha, n) \mapsto f_{x, n}(\alpha).$
\end{proof}

To prove \cref{union}, we now work in a symmetric extension $N$ of $L$ in which $A:=V_{\omega \cdot 2}$ does not inject into $\mathcal{P}^n(\alpha)$ for any ordinal $\alpha$ and $n<\omega.$ Stage $\omega+1$ of the Bristol model construction (denoted $M_{\omega+1}$) has this property (\cite{bristol}, \S5.1).

Let $\kappa = \aleph(\mathcal{P}^{\omega}(A)),$ and define $$M=\{x: \forall y \in TC(\{x\}) \exists n<\omega(|y| \le |\mathcal{P}^n(A)| \vee |y| \le |\mathcal{P}^n(\kappa)|)\}.$$ Applying \cref{lind} to $\mathcal{P}^{\omega}(A \cup \kappa),$ we see that $M$ is a transitive set. We check that
$M \models \mathrm{ZF} - \mathrm{Union} + \neg \mathrm{FinUnion}.$

We get Extensionality and Foundation from $M$ being transitive. Pairing, Infinity, and Power Set are clear. We get Comprehension and Replacement from $M$ being closed under subset and surjection  (noting that if $x$ injects into $y,$ any surjective image of $x$ injects into $\mathcal{P}(y)$). Finally, $A, \kappa \in M,$ yet $A \cup \kappa \not \in M.$ This completes the proof.

\bibliographystyle{plain}
\bibliography{weak.bib}

\end{document}